\documentclass[12pt]{amsart}
\usepackage{graphicx} % Required for inserting images

\usepackage[english]{babel}     % include this always
\usepackage[utf8]{inputenc}
\usepackage{amsmath}
\usepackage{amsfonts}
\usepackage{amssymb}
\usepackage{mathrsfs}
\usepackage{stmaryrd}
\usepackage{hyperref}
\usepackage{xcolor}
\usepackage{comment}
\usepackage{enumerate}
\usepackage{tikz-cd}
\usepackage{pgfplots}
\usepackage{doi}
\usepackage{mathdots}
\usepackage{makecell}
\usepackage{tikz}
\usetikzlibrary{positioning,arrows.meta}
\usetikzlibrary{calc}
\tikzset{
    vertex/.style={circle, fill=black, inner sep=1.5pt},
    edge/.style={->, >=Stealth, semithick},
    inflabel/.style={midway, fill=white, inner sep=1pt}
}

\usepackage{todonotes}
\usepackage{comment}
\specialcomment{auxproof}
{\mbox{}\newline\textbf{BEGIN: AUX-PROOF}\dotfill\newline}
{\mbox{}\newline\textbf{END: AUX-PROOF}\dotfill\newline}
\excludecomment{auxproof}

\usepackage{amsthm}
\newtheorem{theorem}{Theorem}[section]

\newtheorem{lemma}[theorem]{Lemma}

\newtheorem{corollary}[theorem]{Corollary}

\theoremstyle{definition}
\newtheorem{definition}[theorem]{Definition}
\newtheorem{example}[theorem]{Example}
\newtheorem{remark}[theorem]{Remark}

\newcommand{\midbullet}{\mathbin{\raisebox{0.2ex}{\scalebox{0.7}{$\bullet$}}}}
\newcommand{\KK}{\mathrm{KK}}
\newcommand{\KKeq}{\approx_{\mathrm{KK}}}

\title{On split exact sequences and KK-equivalences of amplified graph C*-algebras}
\author{Jesse Reimann, Sophie Emma Zegers}
\date{}

\begin{document}

\maketitle
\begin{abstract}
    We give a general methodology for constructing split exact sequences of amplified graph C*-algebras with sinks. This in turn allows us to construct explicit KK-equivalences with $\mathbb{C}^N$ for a large class of C*-algebras, including the quantum Grassmannian $\mathrm{Gr}_q(2,4)$. We discuss compatibility with known (quantum) CW-constructions and give an explicit KK-equivalence between the classical and quantum projective spaces $\mathbb{C}P^1$ and $\mathbb{C}P_q^1$. 
\end{abstract}
\section{Introduction}

Graph C*-algebras form a class of C*-algebras that has been tremendously useful in the classification program. As their generating relations arise from a directed graph, their structural properties are likewise largely encoded in their underlying graphs. For example, the ideal structure~\cite{bates_hong_raeburn_szymanski_2002}, K-theory~\cite{drinen_tomforde_2002}, and classification theory~\cite{eilers_restorff_ruiz_sorensen_2021} are exceptionally well understood and can be expressed in terms of graph-theoretical properties. 

\vspace{0.3cm}
Special attention has been paid to the graph C*-algebras of \emph{amplified} graphs, i.e.\ graphs in which between any two vertices there are either zero or (countably) infinitely many edges. While perhaps somewhat counterintuitive at first, the amplified structure indeed further simplifies the description of algebraic properties in terms of graph properties, where finite emitters often require extra care. In~\cite{eilers_ruiz_sørensen_2012}, amplified graph C*-algebras were classified via a collection of graph moves, which laid the groundwork for the classification of all unital graph C*-algebras~\cite{eilers_restorff_ruiz_sorensen_2021}. Moreover, it is known that the underlying amplified graph can be recovered from the circle-equivariant K-theory of the graph C*-algebra~\cite{eilers_ruiz_sims_2022}.  

\vspace{0.3cm}
Beyond classification theory, graph C*-algebras have also found applications in noncommutative topology. A significant number of \emph{quantum spaces}, i.e.\ noncommutative generalisations of classical topological spaces, has been found to be isomorphic to an amplified graph C*-algebra. Examples include complex quantum projective spaces, described in~\cite{hong_szymanski_2002} as amplified graph C*-algebras and obtained as fixed point algebras of odd-dimensional Vaksman-Soibel'man quantum spheres $C(S_q^{2n+1})$~\cite{vaksman_soibelman_90} under the gauge action. A similar description has been found for quantum teardrops~\cite{brzezinski_szymanski_2018}, quantum flag manifolds~\cite{BKOS_qfm_prep,strung_qfm_mfo_22}, and quantum weighted projective spaces~\cite{brzezinski_szymanski_2018,kettner_satheesan_XX}.
More generally, in~\cite{d_andrea_zegers_2025} it was shown that graph C*-algebras of amplified acyclic graphs with finitely many vertices are isomorphic to fixed point algebras of Cuntz-Krieger algebras under the canonical gauge action. 

\vspace{0.3cm}
For complex quantum projective spaces, and more generally for quantum flag manifolds, it is known that they are KK-equivalent to their classical counterparts~\cite{neshveyev_tuset_2012}. Moreover, there exists such a KK-equivalence that is equivariant with respect to the action of the maximal torus~\cite{yamashita_2012}. However, explicit witnesses of these KK-equivalences are not known, which hinders our understanding of precisely which structures of classical flag manifolds are preserved under quantisation. Combined with the recent graph description of quantum flag manifolds, this motivates our search for KK-equivalences of amplified graph C*-algebras. Our work builds upon~\cite{arici_zegers_2023}, in which KK-equivalences of complex quantum projective spaces with classical spaces $\mathbb{C}^n$ were explicitly constructed from split exact sequences. More precisely, 
the graph C*-algebraic description of complex quantum projective spaces from~\cite{hong_szymanski_2002} allowed for the construction of an explicit splitting map $C(\mathbb{C}P_q^{n-1})\to C(\mathbb{C}P_q^n)$, which in turn yielded an explicit KK-equivalence between $C(\mathbb{C}P_q^n)$ and $C(\mathbb{C}P_q^{n-1})\oplus\mathbb{K}$. This explicit KK-equivalence was used to derive explicit generators of the K-theory of $C(\mathbb{C}P_q^n)$ from the representation theory of~$C(\mathbb{C}P_q^n)$ and the Vaksman-Soibel'man sphere $C(S_q^{2n+1})$.

\vspace{0.3cm}
In this work, we expand the work of~\cite{arici_zegers_2023} in the following ways.
\begin{enumerate}
    \item We give a large family of permitted splittings, which we expect to prove valuable in the search for explicit \emph{equivariant} KK-equivalences and explicit K-theory generators.
    \item Our construction applies to a large class of amplified graph C*-algebras, namely those with finitely many vertices and a sink. {In particular, our results apply to quantum spaces such as the quantum Grassmannian $Gr_q(2,4)$.}
\end{enumerate}
\vspace{0.3cm}
\paragraph{\bf Structure of the paper}
After introducing amplified graph C*-algebras in Section~\ref{sect: graph_CStar_prelims}, we discuss the construction of an (equivariant) KK-equivalence from a split exact sequence in Section~\ref{sect: KK}. Our main result, Theorem~\ref{thrm: expl_splitting}, is proven in Section~\ref{sect: main_result}. In Section~\ref{sect: applications}, we apply our main result to the quantum Grassmannian $Gr_q(2,4)$ and discuss connections with its CW-decomposition in~\cite{d_andrea_zegers_2025}.
\vspace{0.3cm}
\paragraph{\bf Notation}
We use $\mathbb{K}$ to refer to compact operators on a separable Hilbert space. If $E$ is a Hilbert C*-module, then $\mathbb{K}(E)$ will refer to the ``compact'' operators on $E$ in the sense of Definition~\ref{def: K_E}.  For a C*-algebra $B$, we let $M(B)$ denote the multiplier algebra of $B$.
\vspace{0.3cm}
\paragraph{\bf Acknowledgements} Part of this work was carried out during the first author’s visit to the University of Tokyo. JR thanks Yasuyuki Kawahigashi for his hospitality. We also thanks Enli Chen for helpful discussions.

\section{Graph C*-algebras of amplified directed graphs}\label{sect: graph_CStar_prelims}
We give a basic introduction to graph C*-algebras here, with a focus on amplified graph C*-algebras. For a thorough introduction we refer to~\cite{raeburn_2005} (though we note that the role of the source and range maps are interchanged in~\cite{raeburn_2005} and our work, as is common in much of the graph C*-algebra literature).
\begin{definition}[Graph C*-algebras]
    Let $\Gamma=(\Gamma^0,\Gamma^1,s,r)$ be a directed graph with vertices $\Gamma^0$, edges $\Gamma^1$, and source/range maps $s,r:\Gamma^1\to\Gamma^0$. The \emph{graph C*-algebra} $C^*(\Gamma)$ is the universal C*-algebra generated by $\{p_v,s_e\mid v\in\Gamma^0,\;e\in\Gamma^1\}$, where $(p_v)_{v\in\Gamma^0}$ are mutually orthogonal projections, $(s_e)_{e\in\Gamma^1}$ are partial isometries satisfying $s_e^*s_f=0$ for $e\neq f$, and the generators satisfy the \emph{Cuntz-Krieger relations}\newpage
    \begin{enumerate}
        \item[(CK1)] $s_e^*s_e=p_{r(e)}$, $e\in \Gamma^1$,
        \item[(CK2)] $s_es_e^*\le p_{s(e)}$, $e\in\Gamma^1$,
        \item[(CK3)] if $v\in\Gamma^0$ is a \emph{finite emitter}, i.e.\ such that $0<|\{e\in\Gamma^1\mid s^{-1}(e)=v\}|<\infty$, then~$p_v=\sum_{s^{-1}(e)=v}s_es_e^*$. 
    \end{enumerate}
\end{definition}

\tikzset{
    vertex/.style={circle, draw=black, fill=black, inner sep=1pt},
    edge/.style={->, >=Stealth}
}
\begin{figure}[h]
    \centering
    \begin{tikzpicture}[node distance=1.2cm and 2cm]

% 1) One vertex, no edges
\node[vertex] (g1v1) {};
\node[below=0.55cm of g1v1] {$\mathbb{C}$};

% 2) One vertex with loop
\node[vertex, right=1.5cm of g1v1] (g2v1) {};
\draw[edge] (g2v1) edge[loop above, in=45, out=135, min distance=10mm] (g2v1);
\node[below=0.5cm of g2v1] {$C(S^1)$};

% 3) Three vertices in a line
\node[vertex, right=1.5cm of g2v1] (g3v1) {};
\node[vertex, right=1cm of g3v1] (g3v2) {};
\node[vertex, right=1cm of g3v2] (g3v3) {};
\draw[edge] (g3v1) -- (g3v2);
\draw[edge] (g3v2) -- (g3v3);
\node[below=0.5cm of g3v2] {$M_3(\mathbb{C})$};

% 4) Infinite path
\node[vertex, right=1.5cm of g3v3] (g4v1) {};
\node[vertex, right=1cm of g4v1] (g4v2) {};
%\node[vertex, right=1cm of g4v2] (g4v3) {};
\node[right=0.8cm of g4v2] (g4dots) {$\dotsc$};
\draw[edge] (g4v1) -- (g4v2);
%\draw[edge] (g4v2) -- (g4v3);
\draw[edge] (g4v2) -- (g4dots);
\node[below=0.55cm of g4v2] {$\mathbb{K}$};

% 5) Two vertices with infinity label on edge
\node[vertex, right=1.5cm of g4dots] (g5v1) {};
\node[vertex, right=1cm of g5v1] (g5v2) {};
\draw[edge] (g5v1) -- node[midway, above] {$(\infty)$} (g5v2) coordinate[midway] (g5mid);
\node[below=0.55cm of g5mid] {$\tilde{\mathbb{K}}$};

\end{tikzpicture}
    \caption{Graph descriptions of selected C*-algebras. Here, $\tilde{\mathbb{K}}$ denotes the minimal unitisation of the compact operators, and $(\infty)$ denotes countably infinitely many edges from one vertex to another.}
    \label{fig:sample_graphs}
\end{figure}
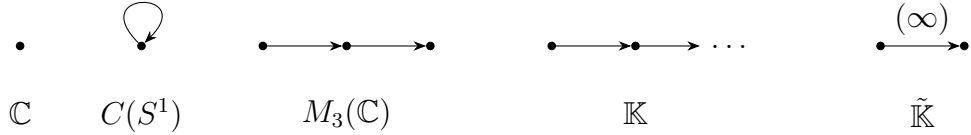

A selection of graph descriptions of familiar C*-algebras is given in Figure~\ref{fig:sample_graphs}. Within a graph $\Gamma$, a \emph{path}~$\alpha:=\alpha_1\cdots\alpha_n$ is a sequence of edges $\alpha_i\in\Gamma^1$ such that $r(\alpha_i)=s(\alpha_{i+1})$ for all $i=1,\dotsc,n-1$. Its source and range are defined as $s(\alpha):=s(\alpha_1)$, $r(\alpha):=r(\alpha_n)$. We will let a path with source $v$ and range $w$ be denoted by $v\to w$ when the edges of which the path consists are not relevant.
Moreover, graph C*-algebras carry a canonical \emph{gauge action} $\gamma: U(1)\to \mathrm{Aut}\;C^*(\Gamma)$, defined on generators as 
\begin{equation}\label{eqn: gauge_action}
    \gamma_z(p_v)=p_v,\quad \gamma_z(s_e)=zs_e.
\end{equation}

\vspace{0.3cm}
Many properties of graph C*-algebras can be read directly off the corresponding graph. For example, $C^*(\Gamma)$ is unital if and only if $|\Gamma^0|<\infty$, in which case $1_{C^*(\Gamma)}=\sum_{v\in \Gamma^0}p_v$. Moreover, one can gather information about the gauge-invariant ideal structure of $C^*(\Gamma)$ from the properties of $\Gamma$, see e.g.\ \cite{bates_hong_raeburn_szymanski_2002, eilers_ruiz_sørensen_2012}. 

\vspace{0.3cm}
A graph $\Gamma$ is said to be \emph{amplified} if for any two vertices $v,w\in\Gamma^0$, there are either no edges with source $v$ and range $w$, or (countably) infinitely many. In particular, (CK3) will be trivially satisfied for any family of projections and isometries if $\Gamma$ is amplified. Moreover, all ideals of amplified graph C*-algebras are gauge invariant~\cite{bates_hong_raeburn_szymanski_2002}. The ideal structure of amplified graphs is hence particularly well-understood and easily described in terms of subsets of $\Gamma^0$. A set $H\subseteq\Gamma^0$ is said to be \emph{hereditary} if for all $v\in H$ it holds that if there exists a path from $v$ to some $w\in\Gamma^0$, then $w\in H$.

\begin{theorem}[Ideal structure of amplified graphs, cf.\ {\cite[Section 3]{bates_hong_raeburn_szymanski_2002}}]
    Let $\Gamma$ be an amplified directed graph. Then there is a one-to-one correspondence between ideals $I\subseteq C^*(\Gamma)$ and hereditary\footnote{If $\Gamma$ is not amplified, the set $H$ is furthermore required to be \emph{saturated}, i.e.\ a non-sink vertex emitting finitely many edges must belong to $H$ if all its emitted edges have range in $H$. Moreover, if $\Gamma$ is not amplified, this construction only describes the {gauge-invariant} ideals of $C^*(\Gamma)$. See~\cite{bates_hong_raeburn_szymanski_2002}.} 
    subsets $H\subseteq \Gamma^0$. Moreover, $C^*(\Gamma)/I_H\simeq C^*(\Gamma\setminus H)$, where \begin{equation*}
        \Gamma\setminus H:=(\Gamma^0\setminus H, \Gamma^1\setminus\{e\in \Gamma^1\mid s(e)\in H\text{ or }r(e)\in H\}, s|_{(\Gamma\setminus H)^1}, r|_{(\Gamma\setminus H)^1}).
    \end{equation*}
\end{theorem}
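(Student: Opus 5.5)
We prove the correspondence by constructing, for a hereditary set $H\subseteq\Gamma^0$, the ideal $I_H$ generated by $\{p_v\mid v\in H\}$, and then checking that (i) the quotient is $C^*(\Gamma\setminus H)$, (ii) every ideal is of this form, and (iii) $H\mapsto I_H$ is injective.

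\textbf{Step 1: the quotient.} Fix a hereditary $H$. In $C^*(\Gamma)/I_H$ the images of $p_v$ $(v\notin H)$ and $s_e$ (with $s(e),r(e)\notin H$) form a Cuntz--Krieger $\Gamma\setminus H$-family. Relations (CK1), (CK2) pass to quotients, and (CK3) is vacuous because $\Gamma\setminus H$ is again amplified. By universality we obtain a surjection $\pi: C^*(\Gamma\setminus H)\to C^*(\Gamma)/I_H$. It is surjective because every $s_e$ with $s(e)\in H$ or $r(e)\in H$ lies in $I_H$. If $s(e)\in H$, heredity forces $r(e)\in H$, and in either case $s_e=s_ep_{r(e)}\in I_H$.

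Conversely, define a $\Gamma$-family in $C^*(\Gamma\setminus H)$ by sending $p_v\mapsto 0$ for $v\in H$ and $s_e\mapsto 0$ for the removed edges, with the remaining generators mapped to themselves. The one relation to check is (CK1) for a kept edge $e$; this holds since $r(e)\notin H$. Universality gives a homomorphism $C^*(\Gamma)\to C^*(\Gamma\setminus H)$ that kills $I_H$ and inverts $\pi$ on generators. Hence $\pi$ is an isomorphism.

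\textbf{Step 2: every ideal is $I_H$.} Given an ideal $I$, set $H=\{v\mid p_v\in I\}$. This set is hereditary: if $p_v\in I$ and $e$ goes from $v$ to $w$, then $p_w=s_e^*p_vs_e\in I$. Clearly $I_H\subseteq I$. To show equality, pass to $C^*(\Gamma\setminus H)\cong C^*(\Gamma)/I_H$, where the image $J$ of $I$ contains no vertex projection. It therefore suffices to show that an ideal of an amplified graph algebra containing no vertex projection is zero. This is the main obstacle.

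The first reduction is gauge invariance. Amplified graphs satisfy Condition (K): each vertex on a cycle has at least two (indeed infinitely many) distinct return paths. By \cite{bates_hong_raeburn_szymanski_2002}, all ideals of a Condition (K) graph algebra are gauge invariant, as already noted in the text. For a gauge-invariant ideal $J$ containing no $p_v$, the gauge-invariant uniqueness theorem applies to the quotient map $C^*(\Gamma\setminus H)\to C^*(\Gamma\setminus H)/J$. This map is $\gamma$-equivariant and nonzero on each $p_v$, so it is injective, giving $J=0$.

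One subtlety for non-amplified graphs is the need to track breaking vertices and saturation. Here every vertex is either a sink or an infinite emitter, so saturation is automatic and the gap projections $p_v-\sum s_es_e^*$ for finite emitters do not arise. Injectivity of $H\mapsto I_H$ then follows from Step 1: $p_v\notin I_H$ for $v\notin H$, because its image in $C^*(\Gamma\setminus H)$ is nonzero, as one sees from any nondegenerate Cuntz--Krieger family on $\ell^2$ of paths.
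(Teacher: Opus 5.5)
Your proof is correct, but it does not follow the paper's route. The paper gives no argument: it specialises the general classification in \cite{bates_hong_raeburn_szymanski_2002}. There, gauge-invariant ideals correspond to admissible pairs $(H,B)$, with $H$ saturated hereditary and $B$ a set of breaking vertices, and Condition (K) makes every ideal gauge invariant. For amplified graphs saturation is automatic, and there are no breaking vertices, since an infinite emitter cannot send a finite nonzero number of edges outside $H$.

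Your Step 1 makes explicit where amplification enters. Neither $\Gamma$ nor $\Gamma\setminus H$ has finite emitters, so (CK3) never has to be checked and both universal maps exist. This is exactly what replaces the saturation and breaking-vertex bookkeeping. Your remark that gap projections belong to finite emitters is slightly off: in the general theory they belong to breaking vertices, and they disappear because $\Gamma\setminus H$ is again amplified. There is also a bookkeeping slip in the inverse map. The relation that needs an argument is (CK1) for a \emph{removed} edge: sending $s_e\mapsto 0$ forces $p_{r(e)}\mapsto 0$, i.e.\ $r(e)\in H$, and this is where heredity enters. For kept edges, (CK1) is just the relation already holding in $C^*(\Gamma\setminus H)$.

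In Step 2 you resolve what you call the main obstacle by quoting that all ideals of a Condition (K) graph algebra are gauge invariant. The standard proof of that fact is precisely the reduction you have just carried out, followed by the Cuntz--Krieger uniqueness theorem. So this step leans on a result that already contains the content you are trying to establish. You can remove the detour. Since $\Gamma\setminus H$ is amplified, every edge has parallel edges, so every cycle has an exit (Condition (L)). The quotient map $C^*(\Gamma\setminus H)\to C^*(\Gamma\setminus H)/J$ is nonzero on every $p_v$, so the Cuntz--Krieger uniqueness theorem gives $J=0$ directly, with no gauge action involved.

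The two routes buy different things. Yours, so streamlined, shows that amplification plus one standard uniqueness theorem is all that is needed. The paper's citation covers arbitrary graphs, where only the gauge-invariant ideals are classified and admissible pairs are genuinely necessary.
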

By slight abuse of notation, we will refer to the vertices of $\Gamma$ and $\Gamma\setminus H$ by the same notation. If $v\in\Gamma^0$ is a sink, then $\{v\}\subset\Gamma^0$ is hereditary and $I_{\{v\}}\simeq\mathbb{C}$ if $v$ is also a source, and $I_{\{v\}}\simeq\mathbb{K}$ if $v$ is not a source~\cite{bates_hong_raeburn_szymanski_2002}.

Remarkably, if $\Gamma$ is an amplified graph with finitely many vertices, its structural properties depend on the presence or absence of \emph{paths} between vertices rather than adjacency. In fact, we may add or remove edges as long as the path structure is preserved.

\begin{theorem}[{\cite[Section 3]{eilers_ruiz_sørensen_2012}}]\label{thrm: add_edges}
    Let $\Gamma$ be an amplified graph with finitely many vertices. Let $v,w\in \Gamma^0$ be such that there is a path $\alpha$ in $\Gamma$ with $s(\alpha)=v$, $r(\alpha)=w$, $\ell(\alpha)\ge 2$, and such that there are no edges with source $v$ and range $w$. Let $\Tilde{\Gamma}$ be a directed graph with $\Tilde{\Gamma}^0=\Gamma^0$,  $\Tilde{\Gamma}^1=\Gamma^1\cup\{e_{v,w}^k\mid k\in\mathbb{N}\}$,
    and the source and range maps are the natural extensions of that on $\Gamma$ such that $s(e_{v,w}^k)=v,\;r(e_{v,w}^k)=w$ for all $k\in\mathbb{N}$.
    Then $C^*(\Gamma)\simeq C^*(\Tilde{\Gamma})$.
\end{theorem}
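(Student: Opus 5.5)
The plan is to realise the Cuntz--Krieger relations for $\Tilde{\Gamma}$ inside $C^*(\Gamma)$, get a $*$-homomorphism $\varphi\colon C^*(\Tilde{\Gamma})\to C^*(\Gamma)$ from the universal property, and then show that $\varphi$ is an isomorphism. Injectivity should be easy; surjectivity is where I expect the real difficulty.

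\textbf{Constructing the family.} Write the given path as $\alpha=\alpha_1\beta$ with $s(\alpha_1)=v$, $r(\alpha_1)=u$ and $\beta$ a path $u\to w$ of length $\ell(\beta)\ge 1$. Since $\Gamma$ is amplified, there are countably many edges $f_1,f_2,\dotsc$ from $v$ to $u$. Define a family in $C^*(\Gamma)$ as follows:
\begin{itemize}
\item $P_x:=p_x$ for every $x\in\Gamma^0$;
\item $S_g:=s_g$ for every edge $g$ not of the form $f_k$;
\item $S_{f_k}:=s_{f_{2k-1}}$, reindexing the odd $f$'s as the old $v\to u$ edges;
\item $S_{e^k_{v,w}}:=s_{f_{2k}}s_\beta$ for the new edges.
\end{itemize}
(CK1) for the new edges follows from $s_\beta^*s_{f_{2k}}^*s_{f_{2k}}s_\beta=s_\beta^*p_us_\beta=p_w$. (CK2) holds because $s_{f_{2k}}s_\beta s_\beta^*s_{f_{2k}}^*\le s_{f_{2k}}s_{f_{2k}}^*\le p_v$. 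The range projections of the $S_g$ are mutually orthogonal, since the underlying first edges are pairwise distinct. (CK3) is vacuous, because $\Tilde{\Gamma}$ is again amplified. The universal property therefore gives $\varphi$.

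\textbf{Injectivity.} The kernel of $\varphi$ is an ideal of the amplified graph algebra $C^*(\Tilde{\Gamma})$. By the ideal structure theorem above it equals $I_H$ for a hereditary set $H$, and in particular it is generated by the projections $p_x$, $x\in H$. Since $\varphi(p_x)=p_x\neq 0$ for all $x$, we get $H=\emptyset$, so $\varphi$ is injective.

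\textbf{Surjectivity.} This is the main obstacle. The image contains $s_{f_{2k}}s_\beta s_\beta^*$ but apparently not $s_{f_{2k}}$ itself, because $p_u$ need not equal $\sum s_\beta s_\beta^*$ when $u$ is an infinite emitter. My first attempt would be to choose the family more cleverly so that every $s_{f_j}$ is recovered. Concretely, I would use a bijection between $\{f_j\}$ and $\{f_k\}\sqcup\{e^k_{v,w}\}$ together with a unitary-type correction inside $p_vC^*(\Gamma)p_u$ built from the infinitely many edges leaving $u$, which would let $s_{f_{2k}}(p_u-s_\beta s_\beta^*)$ be absorbed by the images of the old edges.

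If that fails, the fallback is to argue abstractly from the injective, unital map $\varphi$:
\begin{itemize}
\item $\varphi$ induces a bijection of ideal lattices, since hereditary subsets of $\Gamma$ and $\Tilde{\Gamma}$ coincide because the path structure is unchanged;
\item for each gauge-invariant subquotient, the K-groups of the amplified graph algebra are computed from the vertex set and the sinks, and these are preserved, so $\varphi$ induces isomorphisms on the filtered K-theory together with the unit class;
\item finally, one invokes the classification of unital graph C*-algebras~\cite{eilers_restorff_ruiz_sorensen_2021} by this invariant to conclude $C^*(\Gamma)\simeq C^*(\Tilde{\Gamma})$, though not necessarily via $\varphi$ itself.
\end{itemize}
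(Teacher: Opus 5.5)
The paper gives no proof of this statement (it is imported from \cite{eilers_ruiz_sørensen_2012}), so your argument has to stand on its own. Your construction of $\varphi$ and your injectivity argument are fine. Surjectivity, however, is a genuine gap, and it cannot be closed by analysing this particular $\varphi$ more cleverly, because in general it is not onto.

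Already for $\Gamma$ with edges $f_j\colon v\to u$, $g_j\colon u\to w$ and $\beta=g_1$, pass to the quotient by the ideal generated by $p_w$. There all $s_{g_j}$ and all $S_{e^k_{v,w}}$ vanish. The remaining generators $p_v,p_u,s_{f_{2k-1}}$ and their adjoints all leave $\mathbb{C}\delta_{f_2}$ invariant in the path representation of $C^*(v\Rightarrow u)$ on $\ell^2(\{u\}\cup\{f_j\})$. But $\langle s_{f_2}\delta_u,\delta_{f_2}\rangle=1$, so $s_{f_2}\notin\operatorname{im}\varphi$.

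Your sketched repair also points the wrong way, for two reasons:
\begin{itemize}
\item A shift assembled from the edges leaving $u$, such as $\sum_j s_{g_{j+1}}s_{g_j}^*$, exists only as a multiplier, not as an element of $C^*(\Gamma)$.
\item Suppose you absorb $s_{f_{2k}}(p_u-s_\beta s_\beta^*)$ into the range of $S_{f_k}$ while that range still contains $s_{f_{2k-1}}s_{f_{2k-1}}^*$. This needs a partial isometry with source $p_u$ whose range has class $2[p_u]-[p_w]\neq[p_u]$ in $K_0(C^*(\Gamma))\cong\mathbb{Z}^{\Gamma^0}$. This group is free on the vertex classes, since an amplified graph has no regular vertices, so no such partial isometry exists.
\end{itemize}

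The missing idea is to split each $s_{f_j}s_{f_j}^*$ into the pieces $s_{f_j}(p_u-s_\beta s_\beta^*)s_{f_j}^*$ and $s_{f_j}s_\beta s_\beta^*s_{f_j}^*$, and to reshuffle only the second kind. First shorten $\alpha$ so that $v$ occurs on it only at its endpoints; its length stays $\ge 2$ because there are no edges $v\to w$. Then $\beta$ uses no edge out of $v$. Keep $P_x=p_x$ and $S_g=s_g$ for $g\notin\{f_k\}$, and put
\[
S_{f_k}:=s_{f_k}(p_u-s_\beta s_\beta^*)+s_{f_{2k}}s_\beta s_\beta^*,\qquad S_{e^k_{v,w}}:=s_{f_{2k-1}}s_\beta .
\]
Since $s_{f_k}^*s_{f_{2k}}=0$, you get $S_{f_k}^*S_{f_k}=p_u$ and $S_{e^k_{v,w}}^*S_{e^k_{v,w}}=p_w$. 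All ranges are mutually orthogonal because each of the pieces above is used exactly once.

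Now $s_\beta$ lies in the image, hence so do
\[
S_{f_k}(p_u-s_\beta s_\beta^*)=s_{f_k}(p_u-s_\beta s_\beta^*),\qquad S_{f_k}s_\beta s_\beta^*=s_{f_{2k}}s_\beta s_\beta^*,\qquad S_{e^k_{v,w}}s_\beta^*=s_{f_{2k-1}}s_\beta s_\beta^* .
\]
So every $s_{f_j}$ is recovered, and your injectivity argument finishes the proof.

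Your classification fallback is incomplete as written. The invariant of \cite{eilers_restorff_ruiz_sorensen_2021} is \emph{ordered} reduced filtered K-theory together with the unit class. You would therefore also have to show that $\varphi_*^{-1}$ preserves the positive cones of all gauge-invariant subquotients, which you do not address. Moreover, that theorem is far heavier than the statement. As the introduction notes, it builds on \cite{eilers_ruiz_sørensen_2012}, so you would need to check that it does not itself rely on this move.
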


\section{KK-theory}\label{sect: KK}
We give a brief introduction to KK-theory, based on~\cite[Sections 13 and 17]{blackadar_1986}. As we are ultimately interested in the KK-theory of graph C*-algebras, we assume throughout this section all C*-algebras $A,B,...$ are separable and trivially graded. 
For a more general treatment, see~\cite{blackadar_1986,jensen_thomsen_1991}.
\begin{definition}[Hilbert C*-modules] Let $B$ be a C*-algebra. A \emph{Hilbert $B$-module} is a right $B$-module $E$ with a sesquilinear form $\langle\cdot,\cdot\rangle: E\times E\to B$, linear in the second argument, such that for all $x,y\in E$, $b\in B$ the following hold:\begin{enumerate}
    \item $\langle x,yb\rangle=\langle x,y\rangle b$,
    \item $\langle y,x\rangle=\langle x,y\rangle^*$,
    \item $\langle x,x\rangle\ge 0$ and $\langle x,x\rangle =0$ iff $x=0$, 
    \item $E$ is complete in the norm $\|x\|:=\|\langle x,x\rangle\|_B^{1/2}.$
\end{enumerate}
\end{definition}
A C*-algebra $B$ is a Hilbert $B$-module with the sesquilinear form $\langle a,b\rangle:=a^*b$. Moreover, any Hilbert space can be interpreted as a Hilbert $\mathbb{C}$-module. 

\vspace{0.3cm}
Let $\mathcal{L}(E)$ denote the \emph{adjointable} operators on $E$, i.e.\ module homomorphisms with a well-defined adjoint with respect to $\langle\cdot,\cdot\rangle$. Such operators are automatically bounded. In fact, if we consider a C*-algebra $B$ as a Hilbert $B$-module, then $\mathcal{L}(B)\simeq M(B)$.
In analogy to Hilbert spaces, one can define ``compact'' operators on a Hilbert C*-module as the closure of the analogues of rank one operators.
\begin{definition}[{$\mathbb{K}(E)$}]\label{def: K_E}
    Let $E$ be a Hilbert C*-module. For $x,y\in E$, define the operator $\theta_{x,y}:E\to E$, $\theta_{x,y}(z):=x\langle y,z\rangle$. Note $\theta_{x,y}\in\mathcal{L}(E)$. Define $\mathbb{K}(E)$ as the closure (with respect to the operator norm) of the linear span of $\{\theta_{x,y}\mid x,y\in E\}$. 
\end{definition}
\begin{remark}
    Note that elements of $\mathbb{K}(E)$ are not necessarily compact. For example, if $A$ is a unital C*-algebra, then $\mathrm{id}_A=\theta_{1,1}\in\mathbb{K}(A)$. However, when no confusion can arise, we will refer to elements of~$\mathbb{K}(E)$ as compact operators on $E$.
\end{remark}

We are now ready to give a definition of KK-classes.
There are multiple equivalent definitions; we will make use of the following two:
\begin{definition}[KK via Kasparov modules]
    Given two C*-algebras $A,B$, a \emph{Kasparov $(A,B)$-module} is a triple $(E,\phi,F)$, where \begin{itemize}
        \item $E$ is a Hilbert $B$-module,
        \item $\phi:A\to \mathcal{L}(E)$ is a *-homomorphism, 
        \item $F\in \mathcal{L}(E)$ is such that for all $a\in A$,  $[F,\phi(a)]$, $(F^2-1)\phi(a)$, and $(F^*-F)\phi(a)$ are elements of $\mathbb{K}(E)$. 
    \end{itemize}
    We define $\mathrm{KK}_h(A,B)$ to be the set of homotopy equivalence classes of Kasparov $(A,B)$-modules.
\end{definition}
\begin{definition}[KK via quasihomomorphisms] Given two C*-algebras $A,B$, a \emph{quasihomomorphism} from $A$ to $B$ is a pair $(\phi^+,\phi^-)$, where $\phi^{\pm}:A\to M(B\otimes\mathbb{K})$ satisfy $\phi^+(a)-\phi^-(a)\in B\otimes\mathbb{K}$ for all $a\in A$.

We define $\mathrm{KK}_c(A,B)$ as the set of homotopy equivalence classes of quasihomomorphisms from $A$ to $B$.
\end{definition}

If $A$ is separable and $B$ is $\sigma$-unital, then $\mathrm{KK}_h(A,B)\simeq \mathrm{KK}_c(A,B)$, see~\cite[Section 5.2]{jensen_thomsen_1991}. Since all graph C*-algebras are separable, they are in particular $\sigma$-unital. Thus these assumptions are always satisfied in our setting, and we will drop the subscript and use both pictures to describe elements of $\mathrm{KK}(A,B)$.

\vspace{0.3cm}
An important class of elements of $\mathrm{KK}(A,B)$ is arises from *-homomorphisms $\varphi:A\to B$. Such a *-homomorphism determines the Kasparov module $(B,\widetilde{\varphi},0)$ and the quasihomomorphism $(\widetilde{\varphi},0)$, where $\widetilde{\varphi}:A\to M(B)(\hookrightarrow M(B\otimes\mathbb{K}))$ is the natural extension of $\varphi$. 

\vspace{0.3cm}
The \emph{Kasparov product} $$\midbullet :\mathrm{KK}(A,B)\times \mathrm{KK}(B,C)\to \mathrm{KK}(A,C)$$ allows us to compose KK-classes. While calculating the Kasparov product of two elements explicitly can be quite technical, in some cases we can express it as a pushforward/pullback construction as follows (see~\cite[Section 4.3]{jensen_thomsen_1991} for details). Let $A,B,C$ be separable C*-algebras and let $\varphi:A\to B$ be a *-homomorphism, then \begin{equation*}
    \varphi^*:= [\varphi] \midbullet\;- : \mathrm{KK}(B,C)\to\mathrm{KK}(A,C),\quad \varphi^*[\phi^+,\phi^-] = [\phi^+ \circ\varphi, \phi^-\circ\varphi]
\end{equation*}
is well-defined.
Moreover, if $\varphi$ is quasi-unital\footnote{A *-homomorphism $\varphi:A\to B$ is \emph{quasi-unital} if there exists a projection $p\in M(B)$ such that $\overline{\varphi(A)B}=pB$.}, then there exists a well-defined strictly continuous\footnote{A *-homomorphism $\phi:M(A)\to M(B)$ is \emph{strictly continuous} if it is continuous w.r.t.\ the strict topologies on the multiplier algebras. The strict topology on $M(A)$ is generated by the seminorms $x\mapsto\|ax\|$ and $x\mapsto\|xa\|$, $a\in A$.} extension $\overline{\varphi}:M(\mathbb{K}\otimes A)\to M(\mathbb{K}\otimes B)$ and hence a well-defined map
\begin{equation*}
    \varphi_*:= -\;\midbullet [\varphi]: \mathrm{KK}(C,A)\to \mathrm{KK}(C,B),\quad \varphi_*[\phi^+,\phi^-]=[\overline{\varphi}\circ\phi^+, \overline{\varphi}\circ\phi^-].
\end{equation*}

Two C*-algebras $A$ and $B$ are called \emph{KK-equivalent}, denoted by $A\approx_{\mathrm{KK}} B$, if there exist $x\in \mathrm{KK}(A,B)$, $y\in \mathrm{KK}(B,A)$ such that $$x\midbullet y=1_A\in \mathrm{KK}(A,A),\quad y\midbullet x=1_B\in \mathrm{KK}(B,B).$$ If $A\approx_{\mathrm{KK}}B$, then for any separable C*-algebra $C$ it holds that $\KK(A,C)\simeq \KK(B,C)$ and $\KK(C,A)\simeq \KK(C,B)$. As the K-theory and K-homology groups of $A$ are isomorphic to $\KK(\mathbb{C},A)$ and $\KK(A,\mathbb{C})$, respectively, it follows that KK-equivalent C*-algebras have the same K-theory and K-homology. Moreover, satisfaction of the Universal Coefficient theorem is closely related to KK-equivalence with a commutative C*-algebra, see~\cite{rosenberg_schochet_1987,skandalis_1988}.  
\subsection{KK-equivalence from split exact sequences}\label{sect: blackadar}
Given a split exact sequence
\begin{equation}\label{eqn: ses}
\begin{tikzcd}
0 \arrow[r] & J \arrow[r, "\iota"] & A \arrow[r, "q", shift left] & B \arrow[l, "s", shift left] \arrow[r] & 0
\end{tikzcd}
\end{equation}
of separable C*-algebras, it is well known that $A\KKeq J\oplus B$, which can be shown abstractly for example by a six-term exact sequence argument. However, this KK-equivalence can also be constructed explicitly, as was shown in~\cite[Section 5.4]{arici_zegers_2023}. As it is central to our work, we briefly review the construction. 

\vspace{0.3cm}
The element $[\iota]\oplus[s]\in\mathrm{KK}(J\oplus B,A)$ has the following inverse with respect to the Kasparov product: Fix a minimal projection $e\in\mathbb{K}$ and let $e_A:A\to\mathbb{K}\otimes A$ be given by~$e_A(a):=e\otimes a$. Moreover, let $r_J:M(\mathbb{K}\otimes A)\to M(\mathbb{K}\otimes J)$ be the unique map such that 
\begin{equation}\label{eqn: define_rJ}
    (\mathrm{id}_{\mathbb{K}}\otimes \iota)(r_J(m)x)=m(\mathrm{id}_{\mathbb{K}}\otimes \iota)(x)
\end{equation} for all $m\in M(\mathbb{K}\otimes A)$ and all $x\in\mathbb{K}\otimes J$ (see~\cite[Ex.\ 1.1.9]{jensen_thomsen_1991} for existence and uniqueness of this map). Let $\omega:= r_J\circ e_A$ and $\pi:=(\omega, \omega\circ s\circ q)$, then \begin{equation*}
    [\pi]\oplus[q]=[(\omega, \omega\circ s\circ q)]\oplus[q]\in\mathrm{KK}(A,J\oplus B)
\end{equation*}
is an inverse to $[\iota]\oplus[s]\in\mathrm{KK}(J\oplus B,A)$, in the sense that \begin{align*}
    ([\iota]\oplus[s]) \midbullet   ([\pi]\oplus[q]) &= \iota_*(  [\pi]) + s_* \circ q_*([1_A])=[1_A]\in \mathrm{KK}(A,A), \\
     ( [\pi]\oplus[q])\midbullet ([\iota]\oplus[s]) &= (\iota^* + s^*)(  [\pi] \oplus q^*([1_B])) = [1_J] \oplus [1_B] \in \mathrm{KK}(J\oplus B, J\oplus B).
\end{align*}
\subsection{Equivariant KK-equivalences from split exact sequences}
In this section, we state an equivariant version of the KK-equivalence result of Section~\ref{sect: blackadar}. Although this result is presumably well-known to experts, we were unable to find it in the literature. Throughout this section, let $G$ be a compact group. The noncompact case is treated e.g.\ in~\cite{blackadar_1986}.

\vspace{0.3cm}
We first give a definition of equivariant KK-classes, where we restrict ourselves to the equivariant Kasparov picture. The following definitions are from~\cite[Section 20]{blackadar_1986}.
\begin{definition}[continuous $G$-action on Hilbert C*-modules]
    Let $B$ be a $G$-C*-algebra, i.e.\ a C*-algebra with a continuous $G$-action. Let $E$ be a Hilbert $B$-module with an action of $G$ on~$E$, i.e.\ a homomorphism from $G$ into the group of bounded invertible linear operators on $E$, such that~$g\cdot (xb)=(g\cdot x)(g\cdot b)$ for all $g\in G$, $x\in E$, $b\in B$. This action is \emph{continuous} if $g\mapsto\|\langle g\cdot x,g\cdot x\rangle\|$ is continuous for all $x\in E$.
\end{definition}
\begin{definition}[Kasparov $G$-modules, $\mathrm{KK}^G$] Let $A,B$ be separable graded {$G$-C*-algebras}. A \emph{Kasparov~$G$-module} for $A,B$ is a triple $(E,\phi,F)$, where\begin{itemize}
    \item $E$ is a countably generated Hilbert $B$-module with a (SOT-)continuous action of $G$,
    \item $\phi:A\to \mathcal{L}(E)$ is an equivariant graded *-homomorphism,
    \item $F\in \mathcal{L}(E)$ is $G$-continuous (i.e.\ $g\mapsto g\cdot F$ is norm continuous), of degree 1, and such that $[F,\phi(a)]$, $(F^2-1)\phi(a)$, $(F^*-F)\phi(a)$, and~${(g\cdot F-F)\phi(a)}$ are all elements of $\mathbb{K}(E)$ for all $a\in A$, $g\in G$.
\end{itemize}
 We define $\mathrm{KK}^G(A,B)$ as the set of homotopy equivalence classes of Kasparov $G$-modules for~$A,B$.   
\end{definition}

Given an equivariant *-homomorphism $\varphi:A\to B$, the Kasparov $G$-module $(B,\varphi,0)$ gives rise to an element in $\mathrm{KK}^G(A,B)$.
Given a split exact sequence as in~\eqref{eqn: ses},
with $\iota, s,q$ $G$-equivariant and $J,A,B$ $G$-C*-algebras, we will now show that the construction of Section~\ref{sect: blackadar} yields a $G$-equivariant KK-equivalence. As  $\iota,q,s$ are $G$-equivariant *-homomorphisms, the Kasparov modules obtained from them automatically yield corresponding $\mathrm{KK}^G$-classes.   We therefore restrict our attention to showing that $[\pi]=[(\omega,\omega\circ s\circ q)]\in\mathrm{KK}^G(A,J)$. To verify equivariance, it is convenient to pass to the Kasparov picture, where this class is represented by the Kasparov module\begin{equation*}
    \left(J\oplus J^{\mathrm{op}}, \widetilde{\omega} \oplus\widetilde{\omega}\circ s\circ q, \left[\begin{matrix}
        0 & 1 \\
        1 & 0
    \end{matrix}\right]\right),
\end{equation*}
where $\widetilde{\omega}$ is the canonical embedding of $A$ into $M(J)\simeq\mathcal{L}(J)$ such that \begin{equation}\label{eqn: define_tildeomega}
    \iota(\widetilde{\omega}(a)x)=a\iota(x)
\end{equation} for all $a\in A$, $x\in J$, see~\cite[Proposition 3.12.8]{pedersen1979c} and~\cite[Example 17.1.2]{blackadar_1986}.
\begin{lemma}\label{lem: omega_tile_equivariant}
    If $\iota$ is $G$-equivariant, then $\widetilde{\omega}$ is $G$-equivariant.
\end{lemma}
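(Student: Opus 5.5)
The plan is to use the uniqueness built into the defining relation~\eqref{eqn: define_tildeomega}. First I need to say what the $G$-action on $M(J)\simeq\mathcal{L}(J)$ is. For $m\in M(J)$ and $g\in G$, set $(g\cdot m)x:=g\cdot(m(g^{-1}\cdot x))$ for $x\in J$, and define right multiplication analogously. Then $g\cdot m$ is again a multiplier, and on $J\subseteq M(J)$ this restricts to the given action. The statement to prove is therefore
\begin{equation*}
\widetilde{\omega}(g\cdot a)=g\cdot\widetilde{\omega}(a)\quad\text{for all } a\in A,\ g\in G.
\end{equation*}
Since $\iota$ is injective, a multiplier of $J$ is determined by its action on $J$. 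So it is enough to show that both sides agree after applying them to an arbitrary $x\in J$ and then applying $\iota$.

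The computation runs as follows. On one side, \eqref{eqn: define_tildeomega} gives directly
\begin{equation*}
\iota\bigl(\widetilde{\omega}(g\cdot a)x\bigr)=(g\cdot a)\,\iota(x).
\end{equation*}
On the other side, by the definition of the action and equivariance of $\iota$,
\begin{equation*}
\iota\bigl((g\cdot\widetilde{\omega}(a))x\bigr)=\iota\bigl(g\cdot(\widetilde{\omega}(a)(g^{-1}\cdot x))\bigr)=g\cdot\iota\bigl(\widetilde{\omega}(a)(g^{-1}\cdot x)\bigr).
\end{equation*}
Applying \eqref{eqn: define_tildeomega} once more turns the right-hand side into
\begin{equation*}
g\cdot\bigl(a\,\iota(g^{-1}\cdot x)\bigr)=g\cdot\bigl(a\,(g^{-1}\cdot\iota(x))\bigr)=(g\cdot a)\,\iota(x),
\end{equation*}
where the second step uses equivariance of $\iota$ again and the last step uses that $G$ acts on $A$ by $*$-automorphisms. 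The two expressions agree, so injectivity of $\iota$ gives $\widetilde{\omega}(g\cdot a)x=(g\cdot\widetilde{\omega}(a))x$ for all $x\in J$. The same argument with multiplication on the right, $x\widetilde{\omega}(a)$, handles the right-multiplier part; alternatively it follows by taking adjoints, since $\widetilde{\omega}$ is a $*$-homomorphism.

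The main obstacle is the first step: writing down the $G$-action on $M(J)$, or equivalently on $\mathcal{L}(J)$ viewed with $J$ as a $G$-Hilbert $J$-module, and checking it is compatible with the identification $M(J)\simeq\mathcal{L}(J)$. That compatibility is exactly what makes ``equivariant'' meaningful in the Kasparov $G$-module built from $\widetilde{\omega}$. Once this convention is fixed, the rest is the formal uniqueness argument above. Continuity of the induced action is not needed for equivariance itself: $\widetilde{\omega}$ maps $A$ into the part of $M(J)$ on which the action is continuous, because $g\mapsto g\cdot a$ is continuous on $A$ and the identity above transports this continuity.
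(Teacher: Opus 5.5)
Your proposal is correct and follows essentially the same route as the paper: you put the conjugation action on $\mathcal{L}(J)\simeq M(J)$ and use the uniqueness in \eqref{eqn: define_tildeomega} together with injectivity of $\iota$. You then compute $\iota\bigl((g\cdot\widetilde{\omega}(a))x\bigr)=(g\cdot a)\,\iota(x)$ using equivariance of $\iota$ twice and multiplicativity of the action on $A$, exactly as the paper does. Your remarks on the right-multiplier part and on continuity are harmless extras that the paper does not need, since it works directly in the $\mathcal{L}(J)$ picture.
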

\begin{proof}
    Let $\alpha_g$, $\rho_g$ denote the action of $G$ on $A,J$, respectively. The canonical action of $G$ on $\mathcal{L}(J)$ is given by $(g\cdot T)(x):=(\rho_g\circ T\circ \rho_{g^{-1}})(x)$.
    Since~\eqref{eqn: define_tildeomega} uniquely determines $\widetilde{\omega}$, it suffices to show that \begin{equation*}
    \iota([g\cdot\widetilde{\omega}(a)]x)=(\alpha_ga)\iota(x)
    \end{equation*}
    for all $x\in J$ and $g\in G$. Indeed, we have \begin{equation*}
        \iota([g\cdot\widetilde{\omega}(a)]x) = \iota([\rho_g\circ\widetilde{\omega}(a)]\rho_{g^{-1}}(x))=(\alpha_g\circ \iota)(\widetilde{\omega}(a)\rho_{g^{-1}}(x))=\alpha_g(a(\iota\circ \rho_{g^{-1}}(x))),
    \end{equation*} where we used~\eqref{eqn: define_tildeomega} in the last equality. Using that $\alpha_g$ is a *-homomorphism, we obtain \begin{equation*}
        \alpha_g(a(\iota\circ \rho_{g^{-1}}(x))) = \alpha_g(a)(\alpha_g\circ \iota\circ \rho_{g^{-1}}(x)) = \alpha_g(a)(\iota\circ \rho_g\circ \rho_{g^{-1}}(x))=\alpha_g(a)\iota(x).
    \end{equation*}
\end{proof}

\begin{lemma}\label{lem: splitting_equivariant_G_module}
    $ \left(J\oplus J^{\mathrm{op}}, \widetilde{\omega} \oplus\widetilde{\omega}\circ s\circ q, \left[\begin{matrix}
        0 & 1 \\
        1 & 0
    \end{matrix}\right]\right)$ is a Kasparov $G$-module. 
\end{lemma}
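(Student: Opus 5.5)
We check the axioms of a Kasparov $G$-module directly, using that $J\oplus J^{\mathrm{op}}$ is the standard graded module: the first summand is even and the second odd. The candidate operator is $F=\left[\begin{smallmatrix}0&1\\1&0\end{smallmatrix}\right]$ and the representation is $\phi:=\widetilde{\omega}\oplus\widetilde{\omega}\circ s\circ q$. We equip $E:=J\oplus J^{\mathrm{op}}$ with the diagonal action $g\cdot(x,y):=(\rho_g x,\rho_g y)$.

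\emph{Structural requirements.} Since $J$ is separable and $\sigma$-unital, $E$ is countably generated. The action is compatible with the right module structure, $\rho_g(xb)=\rho_g(x)\rho_g(b)$, and it is continuous because $\rho$ is a continuous action on $J$. The map $\phi$ is even by construction, since it acts diagonally. It is $G$-equivariant: $\widetilde{\omega}$ is equivariant by Lemma~\ref{lem: omega_tile_equivariant}, and $\widetilde{\omega}\circ s\circ q$ is a composition of equivariant maps. The operator $F$ is odd, self-adjoint and satisfies $F^2=1$, so $(F^2-1)\phi(a)=0$ and $(F^*-F)\phi(a)=0$. Moreover $F$ commutes with the diagonal action, so $g\cdot F=F$ for all $g$. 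Hence $g\mapsto g\cdot F$ is constant, in particular norm continuous, and $(g\cdot F-F)\phi(a)=0$.

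\emph{The commutator condition.} This is the only nontrivial step. A direct computation gives
\[
[F,\phi(a)]=\left[\begin{matrix}0 & \widetilde{\omega}(s q(a))-\widetilde{\omega}(a)\\ \widetilde{\omega}(a)-\widetilde{\omega}(sq(a)) & 0\end{matrix}\right].
\]
If the graded commutator is intended, only the signs of the entries change, so the same argument applies. It therefore suffices to show that $\widetilde{\omega}(a-sq(a))\in\mathbb{K}(J)\simeq J$. Since $q\circ s=\mathrm{id}_B$, we have $q(a-sq(a))=0$, so by exactness $a-sq(a)=\iota(j)$ for some $j\in J$.

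It then suffices to check that $\widetilde{\omega}(\iota(j))$ is left multiplication by $j$. By~\eqref{eqn: define_tildeomega}, for every $x\in J$,
\[
\iota(\widetilde{\omega}(\iota(j))x)=\iota(j)\iota(x)=\iota(jx).
\]
Since $\iota$ is injective, $\widetilde{\omega}(\iota(j))x=jx$. So $\widetilde{\omega}(\iota(j))=j\in J=\mathbb{K}(J)$, where we identify $j$ with the compact operator $\theta_{j,e}$ approximated via an approximate unit $e$ of $J$. The off-diagonal operators built from such compacts lie in $\mathbb{K}(E)$, which completes the verification.

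The main obstacle, though mild, is bookkeeping. One must identify $\mathcal{L}(J)$ with $M(J)$ and $\mathbb{K}(J)$ with $J$, and confirm that the diagonal $G$-action on $E$ is the one inducing the action on $\mathcal{L}(J)$ used in Lemma~\ref{lem: omega_tile_equivariant}.
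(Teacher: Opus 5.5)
Your proof is correct and follows essentially the same route as the paper: $\phi$ is equivariant by Lemma~\ref{lem: omega_tile_equivariant}, $F$ is invariant under the diagonal action so $(g\cdot F-F)\phi(a)=0$, and the remaining conditions hold as in the non-equivariant case. You additionally spell out the commutator step that the paper leaves implicit, via $a-sq(a)\in\iota(J)$ and $\widetilde{\omega}(\iota(j))$ being left multiplication by $j$; a minor remark is that since $\phi(a)$ is even, the graded commutator equals the ordinary one, so no signs change.
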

\begin{proof}
As $J$ is a $G$-algebra, it follows that $J\oplus J^{\mathrm{op}}$ is a Hilbert $J$-module with a continuous $G$-action. By Lemma~\ref{lem: omega_tile_equivariant}, $\phi:=\widetilde{\omega}\oplus\widetilde{\omega}\circ s\circ q$ is equivariant.
Let $F:= \left[\begin{matrix}
        0 & 1 \\
        1 & 0
    \end{matrix}\right]$.
   As in the non-equivariant case, it follows directly that $[F,\phi(a)],$ $(F^*-F)\phi(a)$, $(F^2-1)\phi(a)$  are compact for all $a\in A$.  Let $\rho$ denote the action of $G$ on $J$. Then $F$ is $G$-continuous. Since for $x_1,x_2\in J$, we have  \begin{align*}
            &\left(g\cdot \left[\begin{matrix}
            0 & 1 \\ 1 & 0
        \end{matrix}\right]\right)(x_1\oplus x_2) = \left(\left[\begin{matrix}
            \rho_g & 0 \\ 0 & \rho_g
        \end{matrix}\right]\circ\left[\begin{matrix}
            0 & 1 \\ 1 & 0
        \end{matrix}\right]\circ\left[\begin{matrix}
            \rho_{g^{-1}} & 0 \\ 0 & \rho_{g^{-1}}
        \end{matrix}\right]\right)(x_1\oplus x_2) =x_2\oplus x_1,
        \end{align*}
        i.e.\ $F$ is indeed $G$-invariant. Thus $(g\cdot F-F)\phi(a)$ is trivially compact.
\end{proof}
Hence $[\iota]\oplus[s]\in \mathrm{KK}^{G}(J\oplus B,A)$ is an equivariant KK-equivalence with inverse given by~$[\pi]\oplus[q]\in \mathrm{KK}^G(A,J\oplus B)$.

\section{Explicit splittings for amplified graph C*-algebras}\label{sect: main_result}
In this section we construct an explicit splitting $\sigma: C^*(\Gamma\setminus\{v_s\})\to C^*(\Gamma)$ for the short exact sequence
\begin{equation}\label{eqn: ses_minus_one_sink}
\begin{tikzcd}
0 \arrow[r] & \mathbb{K} \arrow[r, "\iota"] & C^*(\Gamma) \arrow[r, "q" ] & C^*(\Gamma\setminus\{v_s\}) 
\arrow[r] & 0,
\end{tikzcd}
\end{equation}
which (cf.\ Section~\ref{sect: graph_CStar_prelims}) arises from taking the quotient of $C^*(\Gamma)$ by its ideal generated by the projection $p_{v_s}$, where $v_s$ is a sink vertex. In fact, we construct a large family of unital splittings, as described in the following theorem.
\begin{theorem}\label{thrm: expl_splitting}
    Let $\Gamma$ be an amplified graph with finitely many vertices. Let $v_s\in \Gamma^0$ be a sink. Let $v_*\in\Gamma^0$ be such that either $v_*$ is a source or such that for each $w\in \Gamma^0$ with edges $w\to v_*$ there exists a path in $\Gamma$ from $w$ to $v_s$. If there are edges with source $v$ and range $w$, let $(s_{v,w}^i)_{i\in\mathbb{N}}$ denote the partial isometries associated to these edges. Then the map $\sigma:C^*(\Gamma\setminus\{v_s\})\to C^*(\Gamma)$, defined on generators as \begin{align*}
        p_v&\mapsto p_v,&v\in\Gamma^0\setminus\{v_s,v_*\}, \\
        p_{v_*}&\mapsto p_{v_*}+p_{v_s},& \\
        s^i_{v,w}&\mapsto s^i_{v,w},&v\in \Gamma^0,\;w\in \Gamma^0\setminus\{v_s,v_*\}, \\
        s^i_{v,v_*}&\mapsto s^i_{v,v_*}+ s^i_{v,v_s}, &v\in \Gamma^0\setminus\{v_s\},
    \end{align*}
    defines a unital splitting for the short exact sequence~\eqref{eqn: ses_minus_one_sink}.
\end{theorem}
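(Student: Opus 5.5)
The plan is to use the universal property of $C^*(\Gamma\setminus\{v_s\})$. I will exhibit the proposed images as a Cuntz--Krieger family in $C^*(\Gamma)$ for the graph $\Gamma\setminus\{v_s\}$; this gives a $*$-homomorphism $\sigma$. I will then check on generators that $q\circ\sigma=\mathrm{id}$ and that $\sigma$ is unital.

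\textbf{Step 0: making the formula meaningful.} The formula uses partial isometries $s^i_{v,v_s}$ for every $v\ne v_s$ that emits edges to $v_*$. By hypothesis such a $v$ has a path to $v_s$. If this path has length $\ge 2$ but there are no direct edges $v\to v_s$, I apply Theorem~\ref{thrm: add_edges} to adjoin the edges $e^k_{v,v_s}$ without changing $C^*(\Gamma)$. This is done for each such $v$; there are finitely many since $\Gamma^0$ is finite.

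This enlargement does not change $\Gamma\setminus\{v_s\}$, since all new edges have range $v_s$. It also does not change the hereditary set $\{v_s\}$ or the ideal $I_{\{v_s\}}\simeq\mathbb{K}$. So we may assume the $s^i_{v,v_s}$ exist, and the theorem is read in this normalised picture.

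One tempting shortcut is to use an enumeration of distinct paths $v\to v_s$ instead. I would avoid it, because such paths may pass through an edge $v\to v_*$ and then fail the orthogonality required below. If $v_*$ is a source, nothing needs to be done. I expect this step to be the main subtlety: making sure the required edges exist, and that the identification of $C^*(\Gamma)$ is compatible with $q$.

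\textbf{Step 1: Cuntz--Krieger relations.} Write $P_v$ and $S^i_{v,w}$ for the proposed images.

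\emph{Projections.} The $P_v$ are mutually orthogonal projections, since $p_{v_s}\perp p_v$ for $v\ne v_s$.

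\emph{(CK1).} For $w\ne v_*$ it is immediate. For $w=v_*$, expand
\[
(s^i_{v,v_*}+s^i_{v,v_s})^*(s^i_{v,v_*}+s^i_{v,v_s})=p_{v_*}+p_{v_s}.
\]
The cross terms vanish because they are products $s_e^*s_f$ of distinct edges.

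\emph{Orthogonality of ranges.} We need $S_e^*S_f=0$ for $e\ne f$. Every term in the expansion is $s_{e'}^*s_{f'}$ with $e'\ne f'$. The added edges $s^i_{v,v_s}$ are indexed by the same $i$ as $s^i_{v,v_*}$, so distinct edges of $\Gamma\setminus\{v_s\}$ are sent to sums over disjoint sets of edges of $\Gamma$.

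\emph{(CK2).} For $S_e=s^i_{v,v_*}+s^i_{v,v_s}$, the range projection is $s^i_{v,v_*}s^{i*}_{v,v_*}+s^i_{v,v_s}s^{i*}_{v,v_s}$, again because the cross terms vanish. This is at most $p_v\le P_v$. Edges with source $v_*$ have ranges below $p_{v_*}\le P_{v_*}$.

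\emph{(CK3).} It is vacuous, since the graph is amplified.

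\textbf{Step 2: splitting and unitality.} First, $\sum_v P_v=\sum_{v\in\Gamma^0}p_v=1$, so $\sigma$ is unital. Next, $q$ kills $p_{v_s}$ and every $s^i_{v,v_s}$, since these lie in the ideal $I_{\{v_s\}}=\iota(\mathbb{K})$. It fixes all other generators under the identification $C^*(\Gamma)/I_{\{v_s\}}\simeq C^*(\Gamma\setminus\{v_s\})$. Hence $q\circ\sigma$ agrees with the identity on generators, and therefore $q\circ\sigma=\mathrm{id}$. Exactness of~\eqref{eqn: ses_minus_one_sink} is the ideal correspondence theorem, so $\sigma$ is a unital splitting.
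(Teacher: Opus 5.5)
Your proposal is correct and follows essentially the same route as the paper: adjoin edges $v\to v_s$ via Theorem~\ref{thrm: add_edges}, check that the proposed images form a Cuntz--Krieger family (the cross terms vanish), then verify unitality and $q\circ\sigma=\mathrm{id}$ on generators. Your (CK2) bound $s^i_{v,v_*}(s^i_{v,v_*})^*+s^i_{v,v_s}(s^i_{v,v_s})^*\le p_v\le\sigma(p_v)$ is the correct one, since range projections sit under the \emph{source} projection; the paper's display instead bounds these terms by $p_{v_*}$ and $p_{v_s}$, which is a slip.
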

\begin{proof}
    First assume $v_*$ is not a source. If $v$ is a vertex with edges $v\to v_*$, we may by Theorem~\ref{thrm: add_edges} add edges $v\to v_s$ such that the partial isometries $s_{v,v_s}^i$, $i\in\mathbb{N}$, exist.
    The fact that the splitting is a well-defined unital *-homomorphism follows as in~\cite{arici_zegers_2023}; we include it for the convenience of the reader. By construction, $\{\sigma(p_v)\mid v\in \Gamma^0\}$ are mutually orthogonal projections. Moreover, for every $v_1,v_2\in \Gamma^0$, $w_1,w_2\in\Gamma^0\setminus{v_s}$, $i,j\in\mathbb{N}$ we have $$\sigma(s^i_{v_1,w_1})^* \sigma(s_{v_2,w_2}^j)=\delta_{i,j}\delta_{v_1,v_2}\delta_{w_1,w_2}p_{w_1}=\delta_{i,j}\delta_{v_1,v_2}\delta_{w_1,w_2}\sigma(p_{w_1}),$$ where we note that for $w_1=w_2=v_*$ it follows from the Cuntz-Krieger relations of the generators of $C^*(\Gamma)$ that \begin{align*}
        \sigma(s^i_{v_1,v_*})^* \sigma(s_{v_2,v_*}^j)&= (s^i_{v_1,v_*}+ s^i_{v_1,v_s})^* (s^j_{v_2,v_*}+ s^j_{v_2,v_s}) \\&= \delta_{i,j}\delta_{v_1,v_2} ((s^i_{v_1,v_*})^* s^i_{v_1,v_*} +  (s^i_{v_1,v_s})^*s^i_{v_1,v_s}) \\
        &=\delta_{i,j}\delta_{v_1,v_2} (p_{v_*}+p_{v_s})\\
        &=\delta_{i,j}\delta_{v_1,v_2}\sigma (p_{v_*}).
    \end{align*}

    In a similar manner, we obtain $\sigma(s^i_{v,w}) \sigma(s_{v,w}^i)^*\le \sigma(p_{v})$, which follows immediately in case~$w\neq v_*$. If $w=v_*$, we use \begin{equation*}
         s^i_{v,v_*}(s^i_{v,v_s})^* =  s^i_{v,v_*}(s^i_{v,v_*})^*s^i_{v,v_*}(s^i_{v,v_s})^*s^i_{v,v_s}(s^i_{v,v_s})^*= s^i_{v,v_*}{p_{v_*}p_{v_s}}(s^i_{v,v_s})^* =0
    \end{equation*}
    and obtain \begin{equation*}
        \sigma(s^i_{v,v_*}) \sigma(s_{v,v_*}^i)^* = (s^i_{v,v_*}+ s^i_{v,v_s})(s^i_{v,v_*}+ s^i_{v,v_s})^* = \underbrace{s^i_{v,v_*}(s^i_{v,v_*})^*}_{\le p_{v_*}} 
        + \underbrace{s^i_{v,v_s}(s^i_{v,v_s})^*}_{\le p_{v_s}} \le \sigma(p_{v_*}).
    \end{equation*}
    Finally, unitality follows from the explicit form of the unit in graph C*-algebras, as
    \begin{equation*}
    \sigma(1_{C^*(\Gamma\setminus\{v_s\})})=\sigma(\sum_{v\in\Gamma^0\setminus\{v_s\}}p_v)=\sum_{v\in\Gamma^0\setminus\{v_s,v_*\}}p_v+p_{v_*}+p_{v_s}=1_{C^*(\Gamma)}.
    \end{equation*}
     If $v_*$ is a source, note that $\sigma$ maps the generators of $C^*(\Gamma\setminus\{s_v\})$ (except for~$p_{v_*}$) to their natural counterparts in $C^*(\Gamma)$, which is again a unital *-homomorphism by the same arguments. Finally, note that in either case~$q\circ \sigma=\mathrm{id}_{C^*(\Gamma\setminus\{v_s\})}$ holds.
\end{proof}

\begin{example}
    Consider the the amplified graph with structure\begin{equation*}
\tikzset{
    vertex/.style={circle, draw=black, fill=black, inner sep=1pt},
    edge/.style={->, >=Stealth}
}
       \begin{tikzpicture}[node distance=0.8cm and 0.9cm]
    \node[vertex, label=below:{$v_1$}] (v1) {};
    \node[vertex, right=of v1, yshift=0.45cm, label=above:{$v_2$}] (v2) {};
    \node[vertex, right=of v1, yshift=-0.45cm, label=below:{$v_3$}] (v3) {};
    \node[vertex, right=of v2, label=above:{$v_4$}] (v4) {};
    \node[vertex, right=of v3, label=below:{$v_5$}] (v5) {};

    \draw[edge] (v1) -- (v2);
    \draw[edge] (v1) -- (v3);
    \draw[edge] (v2) -- (v4);
    \draw[edge] (v3) -- (v5);
    \end{tikzpicture}
    \end{equation*}
and let $v_s:=v_4$ be the designated vertex to be removed. Then $\{v_1,v_2,v_3\}$ are all valid choices for $v_*$. In particular, note that if $v_*=v_3$, then each edge with range $v_3$ has source~$v_1$, and there exists a path $v_1\to v_4$. On the other hand, $v_5$ is not a valid choice of $v_*$, as no path $v_3\to v_4$ exists.
\end{example}
\begin{remark}
    The natural embedding $\iota: C^*(\Gamma\setminus\{v_s\})\to C^*(\Gamma)$, defined on generators as \begin{align*}
        p_v&\mapsto p_v,\;v\in\Gamma^0\setminus\{v_s\}, \qquad
        s^i_{v,w}\mapsto s^i_{v,w},\;v\in \Gamma^0,\;w\in \Gamma^0\setminus\{v_s\},
    \end{align*}
    is a splitting for~\eqref{eqn: ses_minus_one_sink} as well, albeit not a unital one.
\end{remark}

\begin{corollary}
    Let $v_{s_1},\dotsc,v_{s_k}$ be sinks in $\Gamma$. Let $$\sigma_j: C^*(\Gamma\setminus\{v_1,\dotsc,v_j\})\to C^*(\Gamma\setminus\{v_1,\dotsc,v_{j-1}\}),\; j=1,\dotsc,k$$ denote splittings obtained as in Theorem~\ref{thrm: expl_splitting}. Then $\sigma:=\sigma_1\circ\dotsc\circ\sigma_k$ is a unital splitting for the short exact sequence
    \begin{equation*}\label{eqn: ses_minus_k_sinks}
\begin{tikzcd}
0 \arrow[r] & \mathbb{K}^k \arrow[r, "\iota"] & C^*(\Gamma) \arrow[r, "q"] & C^*(\Gamma\setminus\{v_{s_1},\dotsc,v_{s_n}\}) 
\arrow[r] & 0,
\end{tikzcd}
\end{equation*}
\end{corollary}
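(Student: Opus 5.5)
The plan is induction on $k$, composing the one-step splittings from Theorem~\ref{thrm: expl_splitting}. Throughout I write $\Gamma_j:=\Gamma\setminus\{v_{s_1},\dotsc,v_{s_j}\}$, so that $\Gamma_0=\Gamma$, and I read the sinks in the statement as $v_{s_1},\dotsc,v_{s_k}$ (the indices $v_j$ and $v_{s_n}$ there look like typos).

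First I check that each $\sigma_j$ is actually defined. The set of sinks $\{v_{s_1},\dotsc,v_{s_k}\}$ is hereditary. Each $\Gamma_j$ is again an amplified graph with finitely many vertices. Deleting vertices never creates edges, so $v_{s_j}$ is still a sink in $\Gamma_{j-1}$.

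Theorem~\ref{thrm: expl_splitting} therefore gives a unital splitting $\sigma_j:C^*(\Gamma_j)\to C^*(\Gamma_{j-1})$ of the sequence $0\to I_{\{v_{s_j}\}}\to C^*(\Gamma_{j-1})\xrightarrow{q_j} C^*(\Gamma_j)\to 0$. A suitable $v_*$ can be chosen in $\Gamma_{j-1}$; in the worst case one takes a source, or uses the non-unital natural embedding from the Remark. Here $I_{\{v_{s_j}\}}$ is $\mathbb{K}$ or $\mathbb{C}$, depending on whether $v_{s_j}$ is a source; this matches the statement's $\mathbb{K}^k$ only under the standing assumption that no $v_{s_j}$ is isolated.

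Next I identify the quotient map. Since the $q_j$ send generators $p_v,s_e$ to the generators with the same name, or to $0$ if they involve $v_{s_j}$, the composite $q_k\circ\dotsb\circ q_1$ is exactly the quotient map $q:C^*(\Gamma)\to C^*(\Gamma_k)$ by the ideal $I_H$, where $H=\{v_{s_1},\dotsc,v_{s_k}\}$. It then follows that
\[
q\circ\sigma=q_k\circ\dotsb\circ q_1\circ\sigma_1\circ\dotsb\circ\sigma_k=\mathrm{id}_{C^*(\Gamma_k)},
\]
by cancelling $q_1\sigma_1=\mathrm{id}$ from the inside outward. Unitality of $\sigma$ is immediate because each $\sigma_j$ is unital.

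The step needing the most care is identifying the kernel $I_H$ with $\mathbb{K}^k$. I would argue as follows. The vertices of $H$ are sinks, so any path starting at one of them is trivial. Hence $I_H$ is spanned by the elements $s_\alpha s_\beta^*$ with $r(\alpha)=r(\beta)\in H$.

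For distinct sinks $v,w\in H$ we get $s_\alpha s_\beta^* s_\gamma s_\delta^*=0$ whenever $r(\beta)=v$ and $r(\gamma)=w$. This holds because two paths ending at different sinks cannot extend one another, so $s_\beta^*s_\gamma=0$. Therefore $I_H=\bigoplus_{j}I_{\{v_{s_j}\}}$ as an internal direct sum of mutually orthogonal ideals, each isomorphic to $\mathbb{K}$ by the fact quoted in Section~\ref{sect: graph_CStar_prelims}. This yields the exact sequence with $\mathbb{K}^k$ and completes the proof.
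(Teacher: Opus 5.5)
Your proposal is correct and follows essentially the paper's route: the splitting property comes from the one-step Theorem~\ref{thrm: expl_splitting}, and the kernel is identified with $\mathbb{K}^k$. The differences are that you make the composition $q=q_k\circ\dots\circ q_1$ and the cancellation $q\circ\sigma=\mathrm{id}$ explicit where the paper just says ``the same proof'', and that you prove $I_H=\bigoplus_j I_{\{v_{s_j}\}}$ directly via $s_\beta^*s_\gamma=0$ for paths ending at distinct sinks, where the paper cites Brzezi\'nski--Szyma\'nski, Proposition~3.2. Your caveat that an isolated sink contributes $\mathbb{C}$ rather than $\mathbb{K}$ is correct; the statement glosses over it. One small correction: falling back to the non-unital embedding would destroy the unitality you claim, but that fallback is never needed. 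When $v_{s_j}$ is not a source, any vertex emitting edges into $v_{s_j}$ is an admissible $v_*$ in $\Gamma_{j-1}$: it is not a sink, so it is never deleted, and each of its in-neighbours reaches $v_{s_j}$ through it.
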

\begin{proof}
    Follows by the same proof as Theorem~\ref{thrm: expl_splitting}, using that $\mathbb{K}^k$ is isomorphic to the ideal generated by~$p_{v_{s_1}},\dotsc,p_{v_{s_k}}$ in $C^*(\Gamma)$, see~\cite[Proposition~3.2]{brzezinski_szymanski_2018}
\end{proof}

\begin{corollary}\label{cor: KK_equiv_graphs}
    If $\Gamma$ is an acyclic amplified graph with finitely many vertices, then a KK-equivalence $C^*(\Gamma)\approx_{\mathrm{KK}}\mathbb{C}^{|\Gamma^0|}$ can be constructed explicitly.
\end{corollary}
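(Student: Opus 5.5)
We argue by induction on $|\Gamma^0|$, peeling off one sink at a time using Theorem~\ref{thrm: expl_splitting} and the explicit inverse of Section~\ref{sect: blackadar}.

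\textbf{Base case.} If $|\Gamma^0|=1$, acyclicity rules out loops. Hence $\Gamma$ is a single vertex with no edges, $C^*(\Gamma)=\mathbb{C}$, and the identity is an explicit KK-equivalence.

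\textbf{Inductive step.} Let $\Gamma$ be acyclic and amplified with finitely many vertices, and $|\Gamma^0|\ge 2$.
\begin{enumerate}
\item Since $\Gamma$ is finite and acyclic, following any maximal path from any vertex ends at a sink, so a sink $v_s$ exists.
\item Choose $v_*$ as required in Theorem~\ref{thrm: expl_splitting}. The natural choice is a source of $\Gamma\setminus\{v_s\}$, which exists by acyclicity, taking $v_*\neq v_s$. Its incoming edges in $\Gamma$ can only come from $v_s$, and a sink emits none, so $v_*$ is a source of $\Gamma$ and the hypothesis holds.
\item Theorem~\ref{thrm: expl_splitting} then gives an explicit splitting $\sigma$ of \eqref{eqn: ses_minus_one_sink}. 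The remark after it shows that even the non-unital natural embedding would suffice.
\item Section~\ref{sect: blackadar} turns this into an explicit KK-equivalence $C^*(\Gamma)\KKeq J\oplus C^*(\Gamma\setminus\{v_s\})$, with classes $[\iota]\oplus[\sigma]$ and $[\pi]\oplus[q]$, where $\pi=(\omega,\omega\circ\sigma\circ q)$.
\end{enumerate}

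\textbf{Identifying the ideal.} By the discussion following the ideal-structure theorem, $J=I_{\{v_s\}}$ is $\mathbb{K}$ if $v_s$ is not a source and $\mathbb{C}$ if $v_s$ is isolated. In the first case we compose with the standard explicit KK-equivalence $\mathbb{K}\KKeq\mathbb{C}$, given by the corner embedding $\mathbb{C}\to\mathbb{K}$, $\lambda\mapsto\lambda e$. Its inverse is the class of $(\mathbb{K},\mathrm{id},0)$ in $\mathrm{KK}(\mathbb{K},\mathbb{C})$, i.e.\ the Morita equivalence bimodule $\ell^2$.

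\textbf{Recursion.} The graph $\Gamma\setminus\{v_s\}$ is again acyclic and amplified with $|\Gamma^0|-1$ vertices, so the induction hypothesis gives an explicit KK-equivalence with $\mathbb{C}^{|\Gamma^0|-1}$. The Kasparov product is associative, and direct sums of KK-equivalences are KK-equivalences. Combining the pieces yields
\[
C^*(\Gamma)\KKeq \mathbb{C}\oplus\mathbb{C}^{|\Gamma^0|-1}=\mathbb{C}^{|\Gamma^0|}.
\]
Every ingredient is explicit: the inclusions and quotient maps, the splittings $\sigma_j$, the maps $\omega$ built from multiplier extensions, and the corner embeddings. Alternatively, one could remove all sinks at once using the preceding corollary and iterate on the layers of the graph.

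\textbf{Main obstacle.} The only real subtlety is making sure "explicit" survives composition. Products such as $[\pi]\midbullet[\text{Morita}]$ must be written concretely. I would handle this by the pushforward/pullback formulas $\varphi_*,\varphi^*$ of Section~\ref{sect: KK}. The corner embedding $\mathbb{C}\to\mathbb{K}$ is not quasi-unital as needed, so I would avoid pushing forward along it. Instead I would represent the composite directly as a quasihomomorphism $A\to M(\mathbb{K}\otimes\mathbb{K})\cong M(\mathbb{K})$, using $\mathbb{K}\otimes\mathbb{K}\cong\mathbb{K}$, so that it lands in $\mathrm{KK}(A,\mathbb{C})$.
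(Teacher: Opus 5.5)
Your proof is correct and is essentially the paper's argument: you peel off sinks one at a time with Theorem~\ref{thrm: expl_splitting}, invert $[\iota]\oplus[\sigma]$ by $[\pi]\oplus[q]$ as in Section~\ref{sect: blackadar}, and finish with the explicit equivalence $\mathbb{K}\KKeq\mathbb{C}$. Your induction packages the paper's composite classes $\Pi_\Gamma$, $I_\Gamma$, and you are more careful than the paper in checking that a valid $v_*$ exists and in noting that an isolated sink contributes $\mathbb{C}$ rather than $\mathbb{K}$.

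Two small slips, neither of which breaks the proof. First, the inverse of the corner embedding is the class of $(\ell^2,\text{natural action},0)$, not $(\mathbb{K},\mathrm{id},0)$. Second, the corner embedding $\lambda\mapsto\lambda e$ is in fact quasi-unital (take $p=e$, as for any $*$-homomorphism out of a unital algebra), though your stabilisation via $\mathbb{K}\otimes\mathbb{K}\cong\mathbb{K}$ is still a good way to make the product with the Morita class explicit.
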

\begin{proof}
    Let $N:=|\Gamma^0|$. Repeatedly applying Theorem~\ref{thrm: expl_splitting} yields a collection of split exact sequences
    \begin{equation*}\label{eqn: ses_minus_ith_sink}
\begin{tikzcd}
0 \arrow[r] & \mathbb{K} \arrow[r, "\iota_i"] & C^*(\Gamma\setminus\{v_1,\dotsc,v_{i-1}\}) \arrow[r, "q_i"] & C^*(\Gamma\setminus\{v_1,\dotsc,v_{i}\}) 
\arrow[r] & 0,
\end{tikzcd}
\end{equation*}
    with splittings $\sigma_i: C^*(\Gamma\setminus\{v_1,\dotsc,v_{i}\})\to C^*(\Gamma\setminus\{v_1,\dotsc,v_{i-1}\})$ constructed as in Theorem~\ref{thrm: expl_splitting}, where $v_i$ denotes the chosen sink vertex in $(\Gamma\setminus\{v_1,\dotsc,v_{i-1}\})^0\subset\Gamma^0$ and the choice of the designated vertex $v_*$ at each step is arbitrary.  

    \vspace{0.3cm}
    Define $\Gamma_0:=\Gamma$, $\Gamma_i:= \Gamma_{i-1}\setminus\{v_i\}=\Gamma\setminus\{v_1,\dotsc,v_i\}$.  Following~\ref{sect: blackadar}, we obtain a chain of KK-equivalences \begin{equation*}
        C^*(\Gamma)\approx_{\mathrm{KK}} \mathbb{K}\oplus C^*(\Gamma_1)\approx_{\mathrm{KK}}\mathbb{K}^2\oplus C^*(\Gamma_2)\approx_{\mathrm{KK}}\dotsc\approx_{\mathrm{KK}}\mathbb{K}^{N-1}\oplus C^*(\Gamma_{N-1}).
    \end{equation*}

    \vspace{0.3cm}
    Following~\cite[Theorem 6.2]{arici_zegers_2023}, an explicit KK-equivalence is given by ${\Pi_\Gamma\in \mathrm{KK}(C^*(\Gamma),\mathbb{K}^{N-1}\oplus\mathbb{C})}$ and $I_\Gamma\in \mathrm{KK}(\mathbb{K}^{N-1}\oplus\mathbb{C},C^*(\Gamma))$, which (using the notation from Section~\ref{sect: blackadar}) are defined as\begin{align*}
        \Pi_\Gamma&:= [\pi_1]\oplus ([q_{1}]\midbullet{C^*(\Gamma_{1})}[\pi_2])\oplus ([q_2\circ q_1]\midbullet{C^*(\Gamma_2)}[\pi_3])\oplus \dotsc \\
        &\qquad \oplus([q_{N-2}\circ\dotsc\circ q_2\circ q_1]\midbullet{C^*(\Gamma_{N-2})}[\pi_{N-1}]) \oplus [q_{N-1}\circ\dotsc\circ q_2\circ q_1], \\
        I_{\Gamma} &:= [\iota_1] \oplus [s_1\circ \iota_2] \oplus [s_1 \circ s_2 \circ \iota_3] \oplus \dots \\
        &\qquad \oplus [s_{1}\circ s_{2}\circ \dotsc\circ s_{N-2}\circ \iota_{N-1}] \oplus [s_1\circ s_2\circ\dotsc\circ s_{N-2}\circ s_{N-1}].
    \end{align*}
       By~\cite[Section 5.6.3]{debord_lescure_2008}, there moreover exists an explicit KK-equivalence $\mathbb{K}\approx_{\mathrm{KK}}\mathbb{C}$, given by $[\varphi]\in \mathrm{KK}(\mathbb{C},\mathbb{K})$, where $\varphi:\mathbb{C}\ni 1\mapsto e\in \mathbb{K}$ for some rank one projection $e$, and $[(H,\iota,0)]\in\mathrm{KK}(\mathbb{K},\mathbb{C)}$, where $H$ is a separable Hilbert space and $\iota$ the natural action of the compact operators on that Hilbert space. Altogether, we hence obtain an explicit KK-equivalence $C^*(\Gamma)\approx_{\mathrm{KK}}\mathbb{C}^N$.
\end{proof}
\begin{corollary} If $\Gamma$ is an acyclic amplified graph with $|\Gamma^0|<\infty$, then we obtain an explicit equivariant KK-equivalence $C^*(\Gamma)\approx_{\mathrm{KK}^{U(1)}}\mathbb{K}^{|\Gamma^0|-1}\oplus\mathbb{C}$ .
\end{corollary}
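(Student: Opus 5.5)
The plan is to run the proof of Corollary~\ref{cor: KK_equiv_graphs} again, now keeping track of the gauge action~\eqref{eqn: gauge_action} at every step, and to use the equivariant version of the split-exact-sequence construction (Lemmas~\ref{lem: omega_tile_equivariant} and~\ref{lem: splitting_equivariant_G_module}) in place of the non-equivariant one. Throughout, $U(1)$ acts on each $C^*(\Gamma_i)$ by its gauge action and on each copy of $\mathbb{K}$ by the action it inherits as the ideal $I_{\{v_i\}}\subseteq C^*(\Gamma_{i-1})$. The $\mathbb{C}$ summand, which comes from the last remaining vertex (a sink and a source, so $C^*(\Gamma_{N-1})=\mathbb{C}p_{v_N}$), carries the trivial action. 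We deliberately stop at $\mathbb{K}^{N-1}\oplus\mathbb{C}$, because the induced action on $\mathbb{K}$ is in general not inner-trivialisable in the way that would allow passing equivariantly to $\mathbb{C}$.

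First I will check that every map in the split exact sequence of Theorem~\ref{thrm: expl_splitting} is $U(1)$-equivariant. The inclusion $\iota_i$ is equivariant by construction, since the action on $\mathbb{K}$ is the restricted one. The quotient $q_i$ sends $p_v\mapsto p_v$ and $s_e\mapsto s_e$ (or to $0$), so it intertwines the gauge actions. For the splitting $\sigma_i$ it is enough to check generators. Projections are sent to sums of projections, which are gauge-fixed. Each $s^i_{v,w}$ is sent to $s^i_{v,w}$ or to $s^i_{v,v_*}+s^i_{v,v_s}$, and both of these scale by $z$. So $\gamma_z\circ\sigma_i=\sigma_i\circ\gamma_z$ on generators, and hence everywhere.

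One subtlety needs care. When $v_*$ is not a source, Theorem~\ref{thrm: add_edges} is used to add edges $v\to v_s$, and I have to make sure the isomorphism $C^*(\Gamma)\simeq C^*(\tilde\Gamma)$ can be chosen gauge-equivariant. I expect this to be the main obstacle. My first approach is to avoid the issue by arranging that the added edges are already present, that is, to work with the transitively closed amplified graph from the start. A second option is to note that the isomorphism in~\cite{eilers_ruiz_sørensen_2012} is presumably degree-preserving on generators. If neither works, I would choose $v_*$ at each step so that every $w$ with edges $w\to v_*$ already has edges $w\to v_s$, or take $v_*$ to be a source. This is always possible in the acyclic case, because the final remaining source vertex is available.

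With equivariance in hand, Lemma~\ref{lem: splitting_equivariant_G_module} gives $[\pi_i]\in\mathrm{KK}^{U(1)}(C^*(\Gamma_{i-1}),\mathbb{K})$. Hence $[\iota_i]\oplus[\sigma_i]$ is an equivariant KK-equivalence with inverse $[\pi_i]\oplus[q_i]$. The final step is to assemble $\Pi_\Gamma$ and $I_\Gamma$ exactly as in the proof of Corollary~\ref{cor: KK_equiv_graphs}. Every entry there is either a Kasparov product of equivariant classes or the class of an equivariant $*$-homomorphism, so both lie in $\mathrm{KK}^{U(1)}$. Because the Kasparov product is associative and functorial, the identities verified in Section~\ref{sect: blackadar} continue to hold in $\mathrm{KK}^{U(1)}$. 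This gives $C^*(\Gamma)\approx_{\mathrm{KK}^{U(1)}}\mathbb{K}^{|\Gamma^0|-1}\oplus\mathbb{C}$.
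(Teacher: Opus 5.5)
Your route is the paper's: you check gauge-equivariance of $\iota_i$, $q_i$, $\sigma_i$ on generators, apply Lemma~\ref{lem: splitting_equivariant_G_module} to $[\pi_i]$, and assemble $\Pi_\Gamma$, $I_\Gamma$ as in Corollary~\ref{cor: KK_equiv_graphs}. You go beyond the paper in flagging the edge-adding step of Theorem~\ref{thrm: expl_splitting}. The paper's one-line proof passes over this by declaring all splittings of that theorem equivariant, while Corollary~\ref{cor: KK_equiv_graphs} allows $v_*$ to be arbitrary. However, only your third remedy is sound, so it has to be the argument rather than a fallback.

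\textbf{Why the first two remedies fail.}
\begin{enumerate}
\item Passing to the transitive closure proves the claim for a different $U(1)$-algebra.
\item The isomorphism of Theorem~\ref{thrm: add_edges} cannot be gauge-equivariant. As recalled in the introduction, an amplified graph is recovered from the circle-equivariant K-theory of its graph C*-algebra \cite{eilers_ruiz_sims_2022}. A gauge-equivariant isomorphism $C^*(\Gamma)\simeq C^*(\tilde\Gamma)$ would therefore force $\Gamma\simeq\tilde\Gamma$, which fails because $\tilde\Gamma$ has more adjacent pairs.
\end{enumerate}
Concretely, suppose $v$ emits edges to $v_*$ but not to $v_s$. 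Let $C^*(\Gamma)_1=\overline{\mathrm{span}}\{s_\alpha s_\beta^*\mid |\alpha|=|\beta|+1\}$ be the degree-one spectral subspace. Since $v_s$ is a sink, a nonzero $p_v s_\alpha s_\beta^* p_{v_s}$ would need $\beta$ to be the trivial path at $v_s$ and $\alpha$ an edge $v\to v_s$. Hence $p_vC^*(\Gamma)_1p_{v_s}=0$. So no degree-one $y$ with $y^*y=p_{v_*}+p_{v_s}$ and $yy^*\le p_v$ exists, because $yp_{v_s}$ would be a nonzero element of that space.

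\textbf{Why the source choice always works.} If $|\Gamma_{i-1}^0|\ge 2$, the finite acyclic graph $\Gamma_{i-1}\setminus\{v_i\}$ has a source. It is still a source of $\Gamma_{i-1}$, because the sink $v_i$ emits no edges. With this choice, $\sigma_i$ only sends $p_{v_*}\mapsto p_{v_*}+p_{v_i}$ and fixes every other generator, so equivariance is immediate. The non-unital natural embedding from the remark after Theorem~\ref{thrm: expl_splitting} would serve equally well. Once this choice is fixed, your proof is complete and more careful than the paper's.

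One small point: the identity $\iota_*[\pi]+[s\circ q]=[1_A]$ of Section~\ref{sect: blackadar} does not pass to $\mathrm{KK}^{U(1)}$ by associativity alone. It needs the standard rotation homotopy, which is equivariant because the rotation matrices are $U(1)$-invariant.

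\textbf{Your reason for stopping at $\mathbb{K}^{N-1}\oplus\mathbb{C}$ is wrong.} This does not affect the proof, and the remark after the corollary in the paper makes the same claim. Let $P$ be the set of finite paths ending at $v_s$, so that $I_{\{v_s\}}\simeq\mathbb{K}(\ell^2(P))$. The gauge action on it is $\mathrm{Ad}\,U$ with $U_z\delta_\alpha=z^{|\alpha|}\delta_\alpha$. Hence $(\ell^2(P),U)$ is an equivariant imprimitivity bimodule between $\mathbb{K}$ and $\mathbb{C}$ with trivial action. Since $p_{v_s}$ is a gauge-invariant minimal projection, $[1\mapsto p_{v_s}]$ and $[(\ell^2(P),\mathrm{id},0)]$ are mutually inverse in $\mathrm{KK}^{U(1)}$. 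The chain therefore continues equivariantly to $\mathbb{C}^{|\Gamma^0|}$.
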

\begin{proof}
    Note that all splittings in Theorem~\ref{thrm: expl_splitting} are equivariant *-homomorphisms with regard to the gauge action~\eqref{eqn: gauge_action}. Recalling~\ref{lem: splitting_equivariant_G_module}, all KK-equivalences from Theorem~\ref{thrm: expl_splitting} in the proof of Corollary~\ref{cor: KK_equiv_graphs} are also $\mathrm{KK}^{U(1)}$-equivalences.
\end{proof}
\begin{remark}
    Note that $\mathbb{K}\not\approx_{\mathrm{KK}^{U(1)}}\mathbb{C}$ and hence $C^*(\Gamma)\not\approx_{\mathrm{KK}^{U(1)}}\mathbb{C}^{|\Gamma^0|}$. This can be seen from their graph description (see Section~\ref{sect: graph_CStar_prelims}), as the gauge action is trivial on $\mathbb{C}$ but not on~$\mathbb{K}$.
\end{remark}
     
\section{Application: KK-equivalences and CW-decomposition of $Gr_q(2,4)$}\label{sect: applications}

Classically, flag manifolds are homogeneous spaces, arising as quotients of complex simply connected semisimple Lie groups by their parabolic subgroups\footnote{Subgroups containing the Borel subgroup.}.  Examples include complex projective spaces $\mathbb{C}P^{n-1}\simeq SU(n)/S(U(1)\times U(n-1))$ and  Grassmannians $Gr(k,n)\simeq SU(n)/S(U(k)\times U(n-k))$, which can be interpreted as the space of one-dimensional and $k$-dimensional subspaces of $\mathbb{C}^n$, respectively. Via a quantisation procedure, one can obtain
\emph{quantum flag manifolds}~\cite{stokman_dijkhuizen_1999}.  Arising similarly as fixed point algebras of quantised Lie groups under coactions of corresponding quantum subgroups, they form an important class of quantum homogeneous spaces. 
More recently, they have been described as amplified graph C*-algebras~\cite{BKOS_qfm_prep,strung_qfm_mfo_22} (see also~\cite{d_andrea_zegers_2025}). We briefly review the construction here, specialised to A-series quantum flag manifolds which include the examples we will study. For details regarding classical Lie-theoretic concepts we refer to~\cite{procesi2006lie}.

\vspace{0.3cm}
Quantum (and classical) flag manifolds are classified by their tagged \emph{Dynkin diagrams}, where a subset of the nodes 
$s_1,\dotsc,s_n$ of the Dynkin diagram $A_n$ has been tagged, see Figure~\ref{fig: qfm_graph_example}. From the Dynkin diagram, one can recover the \emph{Weyl group}, which captures the reflection symmetries of the root system of the underlying Lie algebra. For A-series Dynkin diagrams, the associated Weyl group is the symmetric group $S_{n+1}$, i.e.\ the group with generators $s_1,\dotsc,s_n$ such that \begin{enumerate}
    \item $s_i^2=1$, $i=1,\dotsc,n$,
    \item $s_is_js_i=s_js_is_j$ if $|i-j|=1$,
    \item $s_i$ and $s_j$ commute if $|i-j|\neq 1$.
\end{enumerate}
It is well known that $W=S_n$ is finite, and that each word $w\in W$ has a unique reduced word representation, i.e.\ $w=s_{i_n}\cdots s_{i_1}$, $s_{i_1},\dotsc,s_{i_n}\in \{s_1,\dotsc,s_n\}$, and there exists no such representation with fewer letters. We let $\ell(w):=n$ denote the \emph{length} of $w$. For the identity~$e\in W$, we set $\ell(e):=0$.

\vspace{0.3cm}
Let $S$ denote the set of untagged nodes of $A_n$, and $W_S$ the subgroup of $W$ generated by~$S$. The graph underlying the associated quantum flag manifold is constructed as follows. \begin{itemize}
    \item vertices $\Gamma^0=\{\text{reduced words in }W/W_S\}$
    \item edges $\Gamma^1=\{e^k_{v,w}\mid v,w\in \Gamma^0 \text{ such that }v\text{ is a subword of }w\text{ and }\ell(w)=\ell(v)+1.\}$
\end{itemize}
An equivalent construction $\Gamma^0$ is to set $\Gamma^0$ to be the set consisting of the identity and all reduced words in $W$ that begin in a letter corresponding to a marked node in the Dynkin diagram (read from right to left). We give an example of the construction of the graphs for $\mathbb{C}P_q^3$ and $Gr_q(2,4)$ in Figure~\ref{fig: qfm_graph_example}.

\begin{figure}[h]

\setlength{\tabcolsep}{12pt} % Default value: 6pt
\renewcommand{\arraystretch}{3} % Default value: 1
    \centering
    \begin{tabular}{>{\centering\arraybackslash}m{3cm}cl}
       \makecell{(Quantum) flag \\ manifold}& \makecell{$\mathbb{C}P_q^3$}  & \hspace*{1.6cm}\tikz[baseline=(X.base)] \node (X) {$Gr_q(2,4)$}; \\
        \makecell{Dynkin\\ diagram} & 
        
        \begin{tikzpicture}[
        baseline=(current bounding box.center),
    node distance=0.8cm,
    dynkin node/.style={
        circle,
        draw=black,
        fill=white,
        thick,
        minimum size=5mm,
        inner sep=0pt,
    },
    xnode/.style={
        minimum size=5mm,
        inner sep=0pt
    },
    every path/.style={thick}
]
% First diagram
\node[xnode, label=below:{$s_1$}] (x1) at (0,0) {};
\node[dynkin node, label=below:{$s_2$}] (a2) [right=of x1] {};
\node[dynkin node, label=below:{$s_3$}] (a3) [right=of a2] {};

\draw (x1) -- (a2) -- (a3);
\draw[line width=1.2pt] ($(x1.center)+(-0.2,-0.2)$) -- ($(x1.center)+(0.2,0.2)$);
\draw[line width=1.2pt] ($(x1.center)+(-0.2,0.2)$) -- ($(x1.center)+(0.2,-0.2)$);

\end{tikzpicture} & 
\hspace*{0.75cm}{\begin{tikzpicture}[
baseline=(current bounding box.center),
    node distance=0.8cm,
    dynkin node/.style={
        circle,
        draw=black,
        fill=white,
        thick,
        minimum size=5mm,
        inner sep=0pt
    },
    xnode/.style={
        minimum size=5mm,
        inner sep=0pt
    },
    every path/.style={thick}
]

% Second diagram
\node[xnode, label=below:{$s_2$}] (x2) at (0,0) {};
\node[dynkin node, label=below:{$s_1$}] (b1) [left=of x2] {};
\node[dynkin node, label=below:{$s_3$}] (b3) [right=of x2] {};

\draw (b1) -- (x2) -- (b3);
\draw[line width=1.2pt] ($(x2.center)+(-0.2,-0.2)$) -- ($(x2.center)+(0.2,0.2)$);
\draw[line width=1.2pt] ($(x2.center)+(-0.2,0.2)$) -- ($(x2.center)+(0.2,-0.2)$);

\end{tikzpicture}}  \\
      
      \makecell{\; \\ graph}  & 
\begin{tikzpicture}[
baseline=(current bounding box.center),
node distance=0.9cm]
\path[use as bounding box] (-0.4,-0.9) rectangle (3.8,0.9);
    \node[vertex, label=below:{$e$}] (v1) {};
    \node[vertex, right=of v1, label=below:{$s_1$}] (v2) {};
    \node[vertex, right=of v2, label=below:{$s_2s_1$}] (v3) {};
    \node[vertex, right=of v3, label=below:{$s_3s_2s_1$}] (v4) {};

    \draw[edge] (v1) -- (v2);
    \draw[edge] (v2) -- (v3);
    \draw[edge] (v3) --  (v4);
\end{tikzpicture} & 
{\begin{tikzpicture}[
baseline=(current bounding box.center),
node distance=1.1cm and 0.9cm]
    \node[vertex, label=above:{$s_1s_2$}] (v3) at (0,0.45) {};
    \node[vertex, label=below:{$s_3s_2$}] (v4) at (0,-0.45) {};
    \node[vertex, label=below:{$s_2$}] (v2) at (-1.1,0) {};
    \node[vertex, label=below:{$e$}] (v1) at (-2.2,0) {};
    \node[vertex, label=below:{$s_1s_3s_2$}] (v5) at (1.1,0) {};
    \node[vertex, label=below:{$s_2s_1s_3s_2$}] (v6) at (2.7,0) {};

    \draw[edge] (v1) --  (v2);
    \draw[edge] (v2) --  (v3);
    \draw[edge] (v2) --  (v4);
    \draw[edge] (v3) -- (v5);
    \draw[edge] (v4) -- (v5);
    \draw[edge] (v5) --  (v6);
\end{tikzpicture}} \\
    \end{tabular}
    \caption{Construction of the graph description of $\mathbb{C}P_q^3$ and $Gr_q(2,4)$ from their tagged Dynkin diagrams. Here, the tagged nodes are denoted by a cross and the untagged nodes by a circle. The graphs are understood to be amplified; this has been suppressed from the notation to improve readability. Their vertices are labelled by the corresponding element in $W/W_S$.}
    \label{fig: qfm_graph_example}
\end{figure}
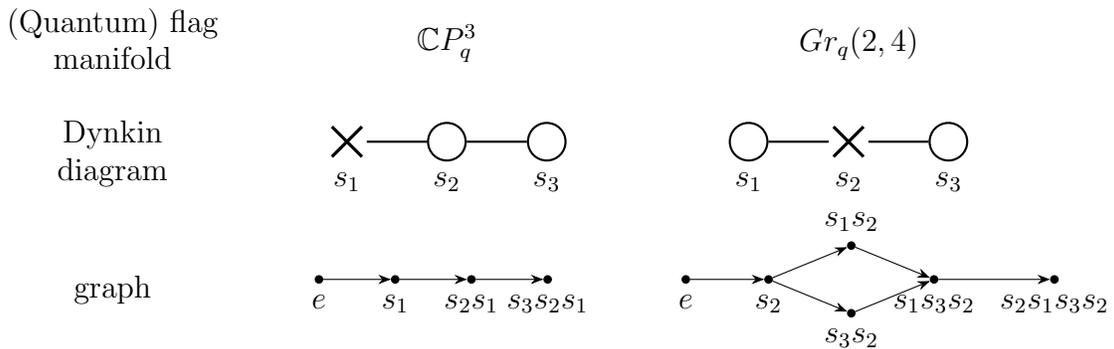

Classical flag manifolds are finite \emph{CW complexes}, i.e.\ they can be constructed by ``gluing'' a finite amount of open balls together (\cite[Section I.10.6.8]{procesi2006lie}, see also~\cite{hatcher2002algebraic} for an introduction to CW complexes). They may be defined inductively as a sequence of skeleta $X^0\subset X^1\subset\dotsc\subset X^N$, where $X^0$ is a discrete set and the inductive construction is given by pushout diagrams 

\[\begin{tikzcd}
	{\sqcup_kS^n} & {\sqcup_kD^{n+1}} \\
	{X^n} & {X^{n+1}}
	\arrow["{\sqcup_k\iota}", from=1-1, to=1-2]
	\arrow["f"', from=1-1, to=2-1]
	\arrow[from=1-2, to=2-2]
	\arrow[from=2-1, to=2-2]
\end{tikzcd}\]
where $f$ describes how the open balls $D^{n+1}$ are glued to $X^n$ to form $X^{n+1}$. For flag manifolds, we can describe this construction concretely in terms of the Weyl group and its quotient $W/W_S$ as follows.\begin{itemize}
    \item Each element of $W/W_S$ of length $k$ corresponds to a $2k$-cell, i.e.\ a topological space homeomorphic to an open ball in $\mathbb{R}^d$.
    \item For each such cell there exists a gluing map, where a cell corresponding to $w\in W_s$ is glued to the union of the boundaries of the cells corresponding to $u\in W_S$ such that $w=s_{i_k}u$ for some generator $s_{i_k}$ of $W$ and $\ell(w)=\ell(u)+1$.
\end{itemize}
An explicit description of the gluing map can be found in~\cite{procesi2006lie}. To give an example of this construction, the Weyl group quotient $W/W_S$ associated with $\mathbb{C}P^1$ is given by $\{e, s_1\}$ (compare to Figure~\ref{fig: qfm_graph_example}). Hence, the associated CW complex constructed as above consists of an open disc in $\mathbb{R}^2\simeq \mathbb{C}$, with its entire boundary glued to a single point. We hence recover that $\mathbb{C}P^1$ is isomorphic to Riemann sphere. 

\vspace{0.3cm}
Clearly, this construction is equivalent to constructing the graph description of quantum flag manifolds. In fact, in~\cite{d_andrea_zegers_2025}, a CW-decomposition of $Gr_q(2,4)$ is given, where the skeletons again have a description as an amplified graph C*-algebra, see Figure~\ref{fig:subgraphs_grassmannian}. In the noncommutative setting, a CW-decomposition is given by a series of pullback diagrams \[\begin{tikzcd}
	{C(X_q^n)} & {C(X_q^{n+1})} \\
	{\mathcal{B}} & {\mathcal{D}}
	\arrow["{\sqcup_k\iota}", from=1-2, to=1-1]
	\arrow["f"', from=1-1, to=2-1]
	\arrow[from=1-2, to=2-2]
	\arrow[from=2-2, to=2-1]
\end{tikzcd}\]
where $C(X_q^n)$ are C*-algebraic analogues of the skeleta in the classical case, and $\mathcal{B}$ and $\mathcal{D}$ are C*-algebras satisfying certain conditions~\cite{d_andrea_hajac_maszczyk_sheu_zielinski_2020}; in particular, their K-theory should agree with that of the classical odd-dimensional open balls (resp. even-dimensional spheres).

\vspace{0.3cm}
In~\cite{d_andrea_zegers_2025}, a quantum CW-decomposition of $C(Gr_q(2,4))$ is given, with an exact sequence of skeleta \begin{equation*}
    C(Gr_q(2,4))\to C(X_q^6)\to C(\mathbb{C}P_q^2\sqcup_{\mathbb{C}P_q^1}\mathbb{C}P_q^2)\to C(\mathbb{C}P_q^1)\to \mathbb{C}.
\end{equation*}
Moreover, all C*-algebras in this sequence have a description as amplified graph C*-algebras. See Figure~\ref{fig:subgraphs_grassmannian} for the graphs corresponding to $C(X_q^6)$ and $C(C(\mathbb{C}P_q^2\sqcup_{\mathbb{C}P_q^1}\mathbb{C}P_q^2))$.

\begin{figure}
    \centering
    \tikzset{
    vertex/.style={circle, fill=black, inner sep=1.5pt},
    edge/.style={->, >=Stealth, semithick},
    inflabel/.style={midway, fill=white, inner sep=1pt}
}
    \begin{tabular}{ccc}
       \begin{tikzpicture}[node distance=0.8cm and 0.9cm]
    \node[vertex] (v1) {};
    \node[vertex, right=of v1] (v2) {};
    \node[vertex, right=of v2, yshift=0.45cm] (v3) {};
    \node[vertex, right=of v2, yshift=-0.45cm] (v4) {};
    \node[vertex, right=of v3, yshift=-0.45cm] (v5) {};

    \draw[edge] (v1) --   (v2);
    \draw[edge] (v2) --   (v3);
    \draw[edge] (v2) --   (v4);
    \draw[edge] (v3) --   (v5);
    \draw[edge] (v4) --  (v5);
\end{tikzpicture}  &\qquad \qquad&\begin{tikzpicture}[node distance=0.8cm and 0.9cm]
    \node[vertex] (v1) {};
    \node[vertex, right=of v1] (v2) {};
    \node[vertex, right=of v2, yshift=0.45cm] (v3) {};
    \node[vertex, right=of v2, yshift=-0.45cm] (v4) {};

    \draw[edge] (v1) --   (v2);
    \draw[edge] (v2) --   (v3);
    \draw[edge] (v2) --   (v4);
\end{tikzpicture} \\
\qquad&&\\
       $X_q^6$  && $\mathbb{C}P_q^2\sqcup_{\mathbb{C}P_q^1}\mathbb{C}P_q^2$
       
    \end{tabular}
\caption{Amplified graphs appearing in the CW-decomposition of $C(Gr_q(2,4))$, yielding graph C*-algebras isomorphic to the skeleta $C(X_q^6)$ and $C(\mathbb{C}P_q^2\sqcup_{\mathbb{C}P_q^1}\mathbb{C}P_q^2)$. The infinite multiplicity of the edges has been suppressed from the notation.}
    \label{fig:subgraphs_grassmannian}
\end{figure}
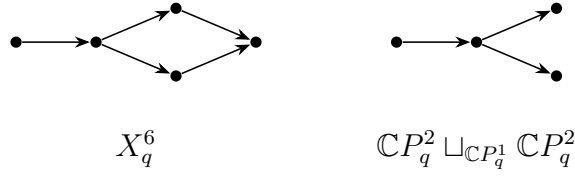

Our result reflects this CW-structure.

\begin{corollary}
    We have an explicit chain of KK-equivalences \begin{align*}
        C(Gr_q(2,4))&\KKeq \mathbb{K}\oplus C(X_q^6)\\
        &\KKeq \mathbb{K}^2\oplus C(\mathbb{C}P_q^2\sqcup_{\mathbb{C}P_q^1}\mathbb{C}P_q^2)\\
        &\KKeq\mathbb{K}^4\oplus C(\mathbb{C}P_q^1)\\ 
        &\KKeq \mathbb{K}^5\oplus\mathbb{C}\\
        &\KKeq \mathbb{C}^6.
    \end{align*}
\end{corollary}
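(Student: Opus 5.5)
The plan is to peel sinks off the graph of $Gr_q(2,4)$ from Figure~\ref{fig: qfm_graph_example} one (or two) at a time, applying Theorem~\ref{thrm: expl_splitting} and its multi-sink corollary at each stage. Each stage produces a split exact sequence, and Section~\ref{sect: blackadar} turns it into an explicit KK-equivalence. We then identify each quotient graph C*-algebra with the corresponding skeleton of the quantum CW-decomposition of~\cite{d_andrea_zegers_2025}, using the graphs in Figure~\ref{fig:subgraphs_grassmannian}.

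\textbf{The sinks, step by step.}
\begin{enumerate}
\item The graph $\Gamma$ of $Gr_q(2,4)$ has the unique sink $v_s=s_2s_1s_3s_2$. Since $C(Gr_q(2,4))\simeq C^*(\Gamma)$ and $v_s$ is not a source, the ideal $I_{\{v_s\}}$ is $\mathbb{K}$. The quotient $\Gamma\setminus\{v_s\}$ is exactly the graph of $X_q^6$ in Figure~\ref{fig:subgraphs_grassmannian}. For $v_*$ we may take the source $e$, which is always allowed. Section~\ref{sect: blackadar} then gives the explicit KK-equivalence $C(Gr_q(2,4))\KKeq\mathbb{K}\oplus C(X_q^6)$, with $[\iota]\oplus[\sigma]$ and its inverse $[\pi]\oplus[q]$.
\item In the graph of $X_q^6$ the unique sink is $s_1s_3s_2$. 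Removing it gives the graph of $\mathbb{C}P_q^2\sqcup_{\mathbb{C}P_q^1}\mathbb{C}P_q^2$, again with ideal $\mathbb{K}$.
\item That graph has two sinks, $s_1s_2$ and $s_3s_2$. The multi-sink corollary (with $k=2$, ideal $\mathbb{K}^2$), or two successive applications of Theorem~\ref{thrm: expl_splitting}, removes both at once. The quotient is the graph $e\to s_1$, the $\mathbb{C}P_q^1$ graph of Figure~\ref{fig: qfm_graph_example}. This produces $\mathbb{K}^4\oplus C(\mathbb{C}P_q^1)$.
\item Removing the sink $s_2$ (the relabelled second vertex) leaves the single vertex $e$, whose graph C*-algebra is $\mathbb{C}$. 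This yields $\mathbb{K}^5\oplus\mathbb{C}$.
\item Finally, applying the explicit KK-equivalence $\mathbb{K}\KKeq\mathbb{C}$ of~\cite{debord_lescure_2008} to each summand gives $\mathbb{C}^6$, exactly as at the end of the proof of Corollary~\ref{cor: KK_equiv_graphs}.
\end{enumerate}

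\textbf{Passing the equivalences through direct sums.} At each intermediate step we must carry the previously split-off copies of $\mathbb{K}$ along. For that we use that KK-equivalences are compatible with direct sums: if $x\in\KK(A,B)$ is invertible, then $1_{\mathbb{K}^m}\oplus x$ is invertible in $\KK(\mathbb{K}^m\oplus A,\mathbb{K}^m\oplus B)$, with inverse $1\oplus x^{-1}$. Composing by the Kasparov product then gives the explicit chain, as in the formulas for $\Pi_\Gamma$ and $I_\Gamma$ of Corollary~\ref{cor: KK_equiv_graphs}.

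\textbf{Main obstacle.} The only nonformal point is identifying each quotient $C^*(\Gamma\setminus H)$ with the skeleton algebra of~\cite{d_andrea_zegers_2025}. This relies on their result that the skeleta are realised by exactly these amplified graphs, and on the fact that the quotient graph is the induced subgraph on the remaining vertices. These isomorphisms are given in~\cite{d_andrea_zegers_2025} and Section~\ref{sect: applications}, so the step amounts to matching vertex sets and checking they agree with Figure~\ref{fig:subgraphs_grassmannian}.
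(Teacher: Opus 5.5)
Your proposal is correct and follows the route the paper intends. You peel off the sink $s_2s_1s_3s_2$, then $s_1s_3s_2$, then the pair $s_1s_2, s_3s_2$ (via the multi-sink corollary, contributing $\mathbb{K}^2$), then $s_2$, applying Theorem~\ref{thrm: expl_splitting} and the construction of Section~\ref{sect: blackadar} at each stage as in Corollary~\ref{cor: KK_equiv_graphs}. You identify the intermediate quotients with the skeleta of Figure~\ref{fig:subgraphs_grassmannian} and finish with $\mathbb{K}\KKeq\mathbb{C}$; the only blemish is cosmetic, since after step 3 the remaining graph is $e\to s_2$ (isomorphic to the $\mathbb{C}P_q^1$ graph), not literally $e\to s_1$.
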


Note that~\cite[19.9.3]{blackadar_1986} (see also~\cite[Section 5]{kasparov_1981}) outlines the construction of an explicit KK-equivalence $\mathbb{C}\KKeq C_0(\mathbb{R}^{2n})$ for all $n\in\mathbb{N}$ in terms of Dirac operators. 
Hence we can construct $\mathbb{K}\KKeq  C_0(\mathbb{D}^{2n-1})\simeq C_0(\mathbb{R}^{2n})$ explicitly. This is a step towards constructing an explicit witness for the KK-equivalence $C_q(G/P)\KKeq C(G/P)$~\cite{neshveyev_tuset_2012} (see also~\cite{yamashita_2012} for an equivariant version)  for general quantum flag manifolds, of which complex projective spaces and $Gr_q(2,4)$ are special cases.  

\vspace{0.3cm}
In fact, in the special case of $\mathbb{C}P_q^1$, the  short exact sequence obtained from the CW-decomposition \begin{equation}\label{eqn: CP1_SES}
\begin{tikzcd}
0 \arrow[r] &  C_0(\mathbb{D}^{1}) \arrow[r] & C(\mathbb{C}P^1) \arrow[r] & C(\{*\}) 
\arrow[r] & 0
\end{tikzcd}
\end{equation}
is split exact, with splitting $\mathrm{id}_{\{*\}}\mapsto id_{\mathbb{C}P^1}$, hence we have an explicit KK-equivalence $$C(\mathbb{C}P^1)\KKeq C_0(\mathbb{D}^{1})\oplus C(\{*\}).$$Altogether, we obtain a chain of explicit KK-equivalences \begin{equation*}
    C(\mathbb{C}P_q^1)\KKeq \mathbb{K}\oplus\mathbb{C}\KKeq C_0(\mathbb{D}^{1})\oplus\mathbb{C}\simeq C_0(\mathbb{D}^{1})\oplus C(\{*\})\KKeq C(\mathbb{C}P^1).
\end{equation*}

\vspace{0.3cm}
However, we do not expect this methodology to extend to all flag manifolds, as analogues of~\eqref{eqn: CP1_SES} do not split. As an example, consider $\mathbb{C}P^2$, which (as a real manifold) consists of one 0-cell, one 2-cell, and one 4-cell. The following sequence, obtained by restricting functions to the complement of the 4-cell, is exact.\begin{equation*}
    \label{eqn: CP2_SES}
\begin{tikzcd}
0 \arrow[r] &  C_0(\mathbb{D}^{3}) \arrow[r] & C(\mathbb{C}P^2) \arrow[r, "q"] & C(\mathbb{C}P^1)
\arrow[r] & 0
\end{tikzcd}
\end{equation*}
However, it is not split exact. Indeed, assume that there exists a *-homomorphism $s:C(\mathbb{C}P^1)\to C(\mathbb{C}P^2)$ such that $q\circ s=\mathrm{id}_{C(\mathbb{C}P^1)}$. Under the Gelfand transform, this is equivalent to there being a retract $r:\mathbb{C}P^2\to \mathbb{C}P^1$, i.e.\ $r$ is continuous and $r|_{\mathbb{C}P^1}=\mathrm{id}$ when viewing $\mathbb{C}P^1$ as a subspace of $\mathbb{C}P^2$. By functoriality, this implies that $\pi_k(r)\pi_k(\iota)=\mathrm{id}_{\pi_k(\mathbb{C}P^1)}$, where $\iota: \mathbb{C}P^1\to \mathbb{C}P^2$ is the natural embedding and $\pi_k$ denotes the $k$th \emph{homotopy group} of a topological space. In particular, $\pi_k(\iota)$ is injective for all $k\in\mathbb{N}$. However, it is known 
that $\pi_{3}(\mathbb{C}P^1)\simeq \mathbb{Z}$ whereas $\pi_3(\mathbb{C}P^2)\simeq \{0\}$, hence no such map can exist. See also~\cite{d_andrea_hajac_maszczyk_sheu_zielinski_2020} for a discussion of a similar result for $n\ge 3$.

\bibliographystyle{acm}
\bibliography{bibliography}

\end{document}